\title{\bf On the convergence properties of Durrmeyer-Sampling Type Operators in Orlicz spaces}
\author{ {\bf Danilo Costarelli} \hskip1cm {\bf Michele Piconi} \hskip1cm {\bf Gianluca Vinti} \\ 
Department of Mathematics and Computer Science \\
University of Perugia\\
1, Via Vanvitelli, 06123 Perugia, Italy \\ 
{\small {\tt danilo.costarelli@unipg.it}} - {\small {\tt michele.piconi1211@gmail.com}} - {\small {\tt gianluca.vinti@unipg.it}} }
\date{}
\newtheorem{lemma}{Lemma}[section]
\newtheorem{teorema}{Theorem}[section]
\newtheorem{cor}{Corollary}[section]
\theoremstyle{definition}
\newtheorem{remark}{Remark}[section]
\begin{document}

\maketitle

\begin{abstract}
 Here we provide a unifying treatment of the convergence of a general form of sampling type operators, given by the so-called Durrmeyer sampling type series.
In particular we provide a pointwise and uniform convergence theorem on $\mathbb{R}$, and in this context we also furnish 
 a quantitative estimate for the order of approximation, using the modulus of continuity of the function to be approximated.
 Then we obtain a modular convergence theorem in the general setting of Orlicz spaces $L^\varphi(\mathbb{R})$. From the latter result, the convergence in $L^p(\mathbb{R})$-space, $L^\alpha\log^\beta L$, and the exponential spaces follow as particular cases. Finally, applications and examples with graphical representations are given for several sampling series with special kernels. 
\end{abstract}
\medskip\noindent
{\small {\bf AMS subject classification:}
41A25,41A35,46E30,47A58,47B38,94A12} \newline
{\small {\bf Key Words:}  Generalized sampling operators, Orlicz spaces,
modular convergence, order of approximation, Durrmeyer sampling operators}

\section{Introduction}
The theory of sampling series, in one and several variables, is one of the most studied topics in the approximation theory, in view of its many applications, especially in signal and image processing.
\\Sampling-type operators have been introduced in order to study approximate version of the well-known Wittaker-Kotel'nikov-Shannon sampling theorem (see, e.g., \cite{20, 1993aa, 2008dl, KKS1, KOPR1, KS1}). Among the most studied families of sampling operators, we can find the celebrated family of the generalized (see, e.g., \cite{19,16,17,18,2001v,2011ade,2020bjma}) and Kantorovich type series (see, e.g., \cite{1, ORTA1}), that have been introduced in the 80s and in 2007, respectively, thanks to the crucial contribution of the German mathematician P.L. Butzer and his coauthors.
\\
The aim of this work is to extend the main approximation properties, including convergence results and quantitative estimates, for the so-called Durrmeyer sampling type series, introduced by C. Bardaro and I. Mantellini in \cite{12}. Durrmeyer sampling operators represent a generalization of the generalized and of the Kantorovich sampling series. The present study is not confined only to the setting of continuous (or uniformly continuous) functions, but it is also extended to the case of functions belonging to Orlicz spaces. Such spaces have been introduced in the 30s thanks to the Polish mathematician W. Orlicz, as a natural extension of $L ^ p$ spaces, and other useful spaces very used in Functional Analysis and its applications, such as interpolation and exponential spaces. Thus,  in this paper we provide a unifying theory, not only in the sense that Durrmeyer sampling type series represent a generalization of the above sampling type operators, but also since the main convergence results will be given in the general setting of Orlicz spaces.
\\From the literature, it is well-known that the classical Bernstein polynomials

\[\left(B_nf\right)(x):=\sum_{k=0}^{n}p_{n,k}(x)f\left(\frac{k}{n}\right),\,\,\,p_{n,k}=\binom{n}{k}x^k(1-x)^{n-k},\,\,\,x\in[0, 1],\]
\noindent
are known since 1912 and they have been used in order to give one of the most elegant proof of the Weierstrass approximation theorem by algebraic polynomials in the space of the continuous functions over the interval $[0, 1]$. The Durrmeyer method applied to Bernstein polynomials is obtained replacing the sample value $f\left(\frac{k}{n}\right)$ by an integral in which the same generating kernel $p_{n,k}$ appears, i.e.,

\[\left(D_nf\right)(x):=(n+1)\sum_{k=0}^{n}p_{n,k}(x)\int_{0}^{1}p_{n,k}(u)f(u)du,\,\,\,x\in[0, 1].\]
The literature about this operator and its generalizations is very wide; we quote here e.g. \cite{13,14,15,2012rm}.\\
In this paper we apply the Durrmeyer method to the sampling series in a generalized form, considering operators of the following type,

\[ \left ( S_w^{\varphi,\psi}f\right)(x):= \sum_{k\in\mathbb{Z}} \varphi(wx-k) w \int_{\mathbb{R}}\psi(wu-k)f(u)du,\,\,\,x\in\mathbb{R},\]
for $w>0$, in which we replace the integral means by a general convolution integral. In fact, in the Durrmeyer sampling type series, the kernel functions $\varphi$ and $\psi$ satisfy certain moments conditions, together with suitable singularity assumptions, in both continuous and discrete form.\\
The central goal of this paper is to provide theoretical results on Durrmeyer sampling type series, starting from pointwise and uniform convergence of $S_w^{\varphi,\psi}f$ to $f$ on $\mathbb{R}$, assuming $f$ continuous or uniformly continuous and bounded. In this regard, we also investigate the problem of the order of the uniform convergence, in case of uniformly continuous and bounded functions, and in the latter setting, we estimate the order of approximation by means of a quantitative estimate, using the first order modulus of continuity of $f$. The qualitative rate of convergence is also deduced assuming $f$ in suitable Lipschitz classes. Further, we also study the problem of the convergence of $S_w^{\varphi,\psi}f$ to $f$ in the general setting of Orlicz spaces.
Here, we consider the most natural notion of convergence, that is the so-called ``modular convergence'', introduced by the modular functional defined on the space. One of the main advantage in studying approximation theorems in Orlicz spaces is the possibility to approximate not-necessarily continuous signals. This is what usually occurs in real world applications (see, e.g., \cite{2018ae}), in which signals are not very regular (as happens, e.g., for images). In the context of Orlicz spaces, we firstly prove a modular inequality for the operators $S_w^{\varphi,\psi}$, and so, we establish a modular convergence theorem.
At the end of the paper, several examples of kernels $\varphi$ and $\psi$ have been provided together with numerical examples and graphical representations.


\section{Preliminaries and notations}
We denote by $C(\mathbb{R})$~the space of all uniformly continuous and bounded functions 
$f: \mathbb{R} \rightarrow \mathbb{R},$~by $C_c(\mathbb{R})$~the subspace of $C(\mathbb{R})$~whose 
elements have compact support. Moreover by $M(\mathbb{R})$~we denote the space of all 
(Lebesgue) measurable real functions over $\mathbb{R}$.

Let $\varphi : \mathbb{R}^+_0 \rightarrow \mathbb{R}^+_0$~be a convex $\varphi$-function, i.e.,
$\varphi$~satisfies the following assumptions:
\begin{enumerate}
\item $\varphi$~is convex in $\mathbb{R}^+_0$;
\item $\varphi (0) = 0$ and $\varphi (u) > 0,$~for every $u >0$. 
\end{enumerate}
Let us consider the functional
$$I^{\varphi}[f] := \int_{I\!\!R}\varphi (|f(x)|)dx, ~~f \in M(\mathbb{R}).$$
As it is well-known (see, e.g., \cite{8,10,11}), $I^{\varphi}$~is a convex modular 
functional on $M(\mathbb{R})$~and the Orlicz space generated by $\varphi$~is defined by
$$L^{\varphi}(\mathbb{R}) = \{f \in M(\mathbb{R}): I^{\varphi}[\lambda f] < + \infty,
\mbox{for some}~\lambda >0\}.$$
The Orlicz space $L^{\varphi}(\mathbb{R})$~is a vector space and the vector subspace
$$E^{\varphi}(\mathbb{R}) = \{f \in M(\mathbb{R}): I^{\varphi}[\lambda f] < + \infty,
\mbox{for every}~\lambda >0\},$$
is called the space of all finite elements of $L^{\varphi}(\mathbb{R}).$ In general 
$E^{\varphi}(\mathbb{R})$~is a proper subspace of $L^{\varphi}(\mathbb{R})$~and they coincide 
if $\varphi$~satisfies the well-known $\Delta_2$-condition, i.e., if there exists a constant $M>0$ such that
\[\varphi(2u)\le M\varphi(u)\,\,\,\,(u\in\mathbb{R}^+_0).\]
Examples of functions $\varphi$ satisfying the $\Delta_2$-condition are $\varphi(u)=u^p$, $1\le p <+\infty$, or $\varphi_{\alpha,\beta}(u)=u^\alpha\log^\beta(e+u)$, for $\alpha\ge 1$ and $\beta>0$, which generate respectively, the $L^p$-spaces and the Zygmund spaces $L^\alpha\log^\beta L$.
On the other hand, the $\varphi$-function $\varphi_\alpha(t)=e^{t^\alpha}-1$, $\alpha>0$, generates the so-called exponential spaces, which are examples of Orlicz spaces for which $E^{\varphi_\alpha}(\mathbb{R})\subset L^{\varphi_\alpha}(\mathbb{R})$.

In $L^{\varphi}(\mathbb{R})$~we work with a notion of convergence called 
\textit{modular convergence}: we will 
say that a net of functions $(f_w)_{w >0} \subset L^{\varphi}(\mathbb{R})$~
is \textit{modularly convergent} to a function $f \in L^{\varphi}(\mathbb{R})$~if 
$$\lim_{w \rightarrow +\infty}I^{\varphi}[\lambda (f_w -f)] = 0,$$
for \textit{some} $\lambda >0.$~This notion induces a topology in $L^{\varphi}(\mathbb{R}),$~
called \textit{modular topology}. 

In the space $L^{\varphi}(\mathbb{R})$~we can also introduce a norm (the Luxemburg norm), 
defined by
$$\|f\|_{\varphi} := \inf\{\lambda >0 : I^{\varphi}[f/\lambda] \leq 1\}.$$
Thus, we also have a stronger notion of convergence in $L^{\varphi}(\mathbb{R}),$~namely the 
norm convergence. It is well known that $\|f_w - f\|_{\varphi} \rightarrow 0$, as $w\rightarrow +\infty$, if and 
only if $I_{\varphi}[\lambda (f_w-f)] \rightarrow 0$, as $w\rightarrow +\infty$, for \textit{every} $\lambda >0.$~The two 
notions of convergence are equivalent if and only if the function $\varphi$~satisfies 
the $\Delta_2$-condition. 
For further details in the matter, see, e.g., \cite{6,21,9,10,11}.

\section{The generalized Durrmeyer sampling series}
Here we recall the definition of the family of the generalized Durrmeyer sampling operators. Such operators have been firstly introduced in \cite{12} in order to study asymptotic expansion and Voronovskaja-type theorems in case of sufficiently regular functions.
Let us consider two functions $\varphi, \psi\in L^1(\mathbb{R})$, such that $\varphi$ is bounded in a neighborhood of the origin, and satisfying
\begin{equation}\label{IpotesiD}
\sum_{k\in\mathbb{Z}} \varphi(u-k)=1,\text{ for every $u\in\mathbb{R}$, and}  \int_{\mathbb{R}} \psi(u)du=1.
\end{equation}
Note that, $\psi$ defines an approximate identity (see, e.g., \cite{1971ap, 2008itsf, 2016jfa, 2017pam}) by the formula $\psi_w(u):=w\psi(wu)$, $u\in\mathbb{R}$ and $w>0$.\\
For any $\nu \in \mathbb{N}_0$, let us define the discrete and continuous algebraic moments of $\varphi$ and $\psi$ respectively, as follows
\[ m_{\nu}(\varphi,u):=\sum_{k\in\mathbb{Z}} \varphi(u-k)(k-u)^\nu,\quad u\in\mathbb{R},\] \\ and  \[\tilde {m}_\nu\left(\psi\right):= \int_{\mathbb{R}}t^\nu \psi(t)dt\] \\and the discrete and continuous absolute moments as \\ 
\begin{equation}\label{momentoassolutod} M_{\nu}(\varphi):=\sup_{u\in\mathbb{R} }\sum_{k\in\mathbb{Z}}\left | \varphi(u-k)\right|\left|u-k\right |^\nu \end{equation} \\and\\  \[\tilde {M}_\nu(\psi):= \int_{\mathbb{R}}\left |t\right|^\nu\left | \psi(t)\right|dt,\]
respectively.\\
Note that, for a function $f:\mathbb{R}\to\mathbb{R}$, the definition of the moments $M_\nu(f), \tilde{M}_\nu(f)$ can also be given for any $\nu\ge 0$.\\
Now, we will call \textit{kernels} a pair of functions $\varphi$ and $\psi$ belonging to $L^1(\mathbb{R})$, satisfying (\ref{IpotesiD}), and such that, there exists $r>0$ for which $M_r(\varphi)<+\infty$.\\
For $w>0$ and for kernels $\varphi$ and $\psi$, we define a family of operators $\left ( S_w^{\varphi,\psi}\right)_{w>0}$  by
 \begin{equation}\label{Durrmeyeroperator} \left ( S_w^{\varphi,\psi}f\right)(x)= \sum_{k\in\mathbb{Z}} \varphi(wx-k) w\int_{\mathbb{R}}\psi(wu-k)f(u)du,\text{$\qquad$} x\in \mathbb{R},
 \end{equation}
 for any given function $f$ such that the above series is convergent, for every $x\in\mathbb{R}$.
 $S_w^{\varphi,\psi}$ are called the \textit{Durrmeyer sampling operators} based on $\varphi$ and $\psi$.\\
In order to study convergence results for the Durrmeyer sampling operators, we first show the following lemma.
 \begin{lemma}\label{Lemma3} Under the above assumptions on the kernel $\varphi$, we have
\begin{description} 
\item[(i)]$M_{0}(\varphi):=\sup_{u\in\mathbb{R} }\sum_{k\in\mathbb{Z}}\left | \varphi(u-k)\right | <+\infty;$
\item[(ii)]For every $\gamma>0$, \[\lim_{w\rightarrow +\infty}\sum_{\left |wx-k\right|>\gamma w}^{}\left | \varphi(wx-k)\right|=0,\]uniformly with respect to $x\in\mathbb{R}$.
\end{description} 
 \end{lemma}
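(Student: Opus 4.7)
The plan for (i) is to split the sum $\sum_{k\in\mathbb{Z}} |\varphi(u-k)|$ into a ``near'' part (finitely many terms close to $u$) and a ``far'' part, and to control the two pieces by different assumptions. More precisely, let $\delta>0$ be such that $\varphi$ is bounded on $[-\delta,\delta]$ (this is guaranteed by the standing assumption that $\varphi$ is bounded in a neighborhood of the origin), with $\|\varphi\|_{\infty,\delta}$ the corresponding bound. For a fixed $u\in\mathbb{R}$ the set of indices $k$ with $|u-k|\le\delta$ is finite and its cardinality is bounded uniformly in $u$, say by $N_\delta$; the contribution of these terms to the sum is therefore at most $N_\delta\,\|\varphi\|_{\infty,\delta}$.

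For the remaining indices, namely those with $|u-k|>\delta$, I would use the assumption $M_r(\varphi)<+\infty$ by inserting the factor $|u-k|^r/|u-k|^r$ and bounding from below the denominator by $\delta^r$:
\begin{equation*}
\sum_{|u-k|>\delta}|\varphi(u-k)|\;=\;\sum_{|u-k|>\delta}|\varphi(u-k)|\,\frac{|u-k|^r}{|u-k|^r}\;\le\;\frac{1}{\delta^r}\,M_r(\varphi).
\end{equation*}
Combining the two estimates yields a bound independent of $u$, and taking the supremum gives $M_0(\varphi)\le N_\delta\|\varphi\|_{\infty,\delta}+\delta^{-r}M_r(\varphi)<+\infty$, as desired.

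Part (ii) is more direct and uses the same trick as the ``far'' estimate above, but with threshold $\gamma w$ (which tends to infinity): for every $x\in\mathbb{R}$,
\begin{equation*}
\sum_{|wx-k|>\gamma w}|\varphi(wx-k)|\;\le\;\sum_{|wx-k|>\gamma w}|\varphi(wx-k)|\,\frac{|wx-k|^r}{(\gamma w)^r}\;\le\;\frac{M_r(\varphi)}{(\gamma w)^r},
\end{equation*}
and the right-hand side tends to zero as $w\to+\infty$, independently of $x$. This gives the claimed uniform limit.

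I do not expect significant obstacles: the only subtlety is in (i), where one must remember that the moment $M_r(\varphi)$ only controls terms outside a fixed window around $u$, so the regularity hypothesis on $\varphi$ near the origin is genuinely needed to handle the near terms. Once the splitting threshold $\delta$ is chosen to match both the neighborhood of boundedness and the region where $|u-k|^r\ge\delta^r$, the argument is routine.
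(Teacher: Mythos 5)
Your proof is correct, and it is precisely the standard argument: the paper itself does not prove this lemma but defers to reference \cite{1}, where the same splitting into a uniformly finite ``near'' part (controlled by boundedness of $\varphi$ near the origin) and a ``far'' part (controlled by $M_r(\varphi)<+\infty$ via the factor $|u-k|^r/\delta^r$), together with the tail estimate $M_r(\varphi)/(\gamma w)^r$ for (ii), is used. No gaps.
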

For a proof of Lemma \ref{Lemma3}, see, e.g., \cite{1}.
\begin{remark}\label{osservazione3.1}We note that:
\begin{description}
\item[(a)] for $\mu,\nu>0$ with $\mu\le\nu$, then $M_\nu(\varphi)<+\infty$ implies $M_\mu(\varphi)<+\infty$, see, e.g., \cite{2019jat}. Moreover, if $\varphi$ has compact support, we immediately have that $M_\nu(\varphi)<+\infty$, for every $\nu\ge 0$.\\
Finally, in an analogous way, for $\mu, \nu>0$ with $\mu\le\nu$, $\tilde{M}_\nu(\psi)<+\infty$ implies $\tilde{M}_\mu(\psi)<+\infty$. 
\item[(b)] from Lemma \ref{Lemma3}, $ S_w^{\varphi,\psi}f$ are well-defined for every $f \in L^\infty(\mathbb{R})$. Indeed,  \[  \left|\left ( S_w^{\varphi,\psi}f\right)(x)\right | \le M_{0}(\varphi)\tilde{M_{0}}(\psi)\lVert f \rVert_\infty,\ x \in \mathbb{R}.\]Thus, the Durrmeyer sampling operator is a bounded linear operator mapping $L^\infty(\mathbb{R})$ into itself.
\item[(c)] if a function $\psi$ is bounded in a neighborhood of the origin and $\psi(u)=\mathcal{O}(\left|u\right|^{-\alpha})$, as $\left|u\right|\rightarrow+\infty$, with $\alpha>\nu+1$, $\nu>0$, then:\[M_\mu(\psi)<+\infty,\text{ for every }0\le\mu\le\nu,\]
see, e.g., \cite{2019jat}.
\end{description}
\end{remark}

\section{Convergence theorems}
From now on, in the whole paper we will always consider kernels $\varphi$ and $\psi$ satisfying the assumptions introduced in Section 3. Note that, with the name \textit{kernels}, we refer to both the functions $\varphi$ and $\psi$, even if they satisfy different assumptions.\\Now, we prove the following pointwise and uniform convergence theorem.
 \begin{teorema}\label{convergence}
  Let $f\in L^\infty(\mathbb{R})$. Then  \[ \lim_{w\rightarrow +\infty}( S_w^{\varphi,\psi}f)(x)=f(x)\] \\at any point $x$ of continuity of $f$. Moreover, if $f\in C(\mathbb{R})$, then \[ \lim_{w\rightarrow +\infty}\lVert S_w^{\varphi,\psi}f-f\rVert_\infty=0.\]
  \end{teorema}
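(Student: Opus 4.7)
The plan is to exploit the partition-of-unity hypothesis $\sum_{k}\varphi(\,\cdot\,-k)=1$ together with the normalization $\int_{\mathbb{R}}\psi=1$, which, after the change of variable $t=wu-k$, gives $w\int_{\mathbb{R}}\psi(wu-k)\,du=1$ for every $k$. Combining these two identities, I can insert $f(x)$ inside the operator and write
\[
(S_w^{\varphi,\psi}f)(x) - f(x) \;=\; \sum_{k\in\mathbb{Z}}\varphi(wx-k)\,w\int_{\mathbb{R}}\psi(wu-k)\bigl[f(u)-f(x)\bigr]\,du,
\]
where the double interchange is justified by absolute convergence, as seen in Remark \ref{osservazione3.1}(b). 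The problem then reduces to controlling a weighted average of local deviations of $f$.

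Given $\varepsilon>0$ and a continuity point $x$, I would choose $\delta>0$ with $|f(u)-f(x)|<\varepsilon$ for $|u-x|<\delta$, and split the $u$-integral into the regions $\{|u-x|<\delta\}$ and $\{|u-x|\ge\delta\}$. On the first region I pull $\varepsilon$ outside, extend the integration back to $\mathbb{R}$, and use Lemma \ref{Lemma3}(i) to obtain the bound $\varepsilon\,M_0(\varphi)\,\|\psi\|_1$. On the second region I use $|f(u)-f(x)|\le 2\|f\|_\infty$ together with the substitution $t=wu-k$ to reduce the estimate to
\[
2\,\|f\|_\infty\sum_{k\in\mathbb{Z}}|\varphi(wx-k)|\int_{|t-(wx-k)|\ge w\delta}|\psi(t)|\,dt.
\]

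The main obstacle is precisely this residual tail, because the cut-off $|t-(wx-k)|\ge w\delta$ depends on $k$. The key idea I plan to apply is a secondary decomposition of the $k$-sum according to whether $|wx-k|\le w\delta/2$ or $|wx-k|>w\delta/2$. In the first case, the triangle inequality forces $|t|\ge w\delta/2$, so the inner integral is dominated by $\int_{|t|\ge w\delta/2}|\psi(t)|\,dt$, which tends to $0$ as $w\to+\infty$ by the absolute continuity of the Lebesgue integral for $\psi\in L^1(\mathbb{R})$, while the $k$-sum is controlled by $M_0(\varphi)$. In the second case the inner integral is trivially bounded by $\|\psi\|_1$, and the remaining $k$-sum $\sum_{|wx-k|>w\delta/2}|\varphi(wx-k)|$ vanishes uniformly in $x$ thanks to Lemma \ref{Lemma3}(ii). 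Combining all the bounds and letting first $w\to+\infty$ (with $\delta$ fixed) and then $\varepsilon\to 0$ yields the pointwise statement. For the uniform claim when $f\in C(\mathbb{R})$, the uniform continuity of $f$ allows $\delta$ to be chosen independently of $x$, and since every constant in the estimates above is already uniform in $x\in\mathbb{R}$, the same argument immediately delivers $\|S_w^{\varphi,\psi}f-f\|_\infty\to 0$.
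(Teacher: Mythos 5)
Your proposal is correct and follows essentially the same route as the paper: both insert $f(x)$ via the identities $\sum_k\varphi(\cdot-k)=1$ and $w\int\psi(wu-k)\,du=1$, and both reduce to a three-way $\delta/2$-decomposition controlled by $M_0(\varphi)\lVert\psi\rVert_1\varepsilon$, the vanishing $L^1$-tail of $\psi$, and Lemma \ref{Lemma3}(ii). The only difference is cosmetic: you split the $u$-integral (by $|u-x|\gtrless\delta$) before the $k$-sum, whereas the paper splits the $k$-sum (by $|wx-k|\gtrless\delta w/2$) first, but the three resulting estimates and the uniformity argument are the same.
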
 
  \begin{proof}
We only prove the second part of the theorem, since the first part can be obtained by similar methods. Let $\varepsilon>0$ be fixed. Then there exists $\delta>0$ such that $\left|f(x)-f(y)\right|<\varepsilon$ when $\left|x-y\right|<\delta$. Let $x \in \mathbb{R}$ be fixed. Using (\ref{IpotesiD}), we have
\[\left |( S_w^{\varphi,\psi}f)(x)-f(x)\right |= \left | \sum_{k\in\mathbb{Z}} \varphi(wx-k) w \int_{\mathbb{R}}\psi(wu-k)f(u)du-f(x) \right |\]
\\
\[= \left | \sum_{k\in\mathbb{Z}} \varphi(wx-k) w \int_{\mathbb{R}}\psi(wu-k) \left [ f(u)-f(x) \right ]du \right |\]
\\
\[\le  \sum_{k\in\mathbb{Z}}\left | \varphi(wx-k) \right | w \int_{\mathbb{R}} \left |\psi(wu-k)\right | \left | f(u)-f(x) \right | du \]
\\
\[= \left \{  \sum_{\left | wx-k \right| \le \frac{\delta}{2}w}\ +\   \sum_{\left | wx-k \right| > \frac{\delta}{2}w}  \right \} \left | \varphi(wx-k) \right | w \int_{\mathbb{R}} \left |\psi(wu-k)\right | \left | f(u)-f(x) \right | du \]
\\
$=I_1 +  I_2.$\\
The first term can be further divided into
\[I_1 = \sum_{\left | wx-k \right| \le \frac{\delta}{2}w}  \left | \varphi(wx-k) \right | w\left \{ \int_{\left | wu-k \right| < \frac{\delta}{2}w}\ + \ \int_{\left | wu-k \right| \ge \frac{\delta}{2}w} \right \} \left |\psi(wu-k)\right | \left | f(u)-f(x) \right | du \]
\\
$=I_{1,1}+I_{1,2}.$
\\
For $u \in\mathbb{R}$ such that $\left |wu-k \right| < \frac{\delta}{2}w$, if $\left | wx-k \right | \le \frac{\delta}{2}w$, we have \[ \left | u-x \right| \le \left |u- \frac{k}{w} \right |+\left |  \frac{k}{w}-x \right | < \frac{\delta}{2}+ \frac{\delta}{2}=\delta. \]
Thus, \[ I_{1,1}<\varepsilon\sum_{\left | wx-k \right| \le \frac{\delta}{2}w}  \left | \varphi(wx-k) \right | w \int_{\left | wu-k \right| < \frac{\delta}{2}w}\  \left |\psi(wu-k)\right | du. \] 
\\
Now, by the change of variable $wu-k=y$, and recalling that $\psi\in L^1(\mathbb{R})$, we have
\[ w\int_{\left | wu-k \right| < \frac{\delta}{2}w}\  \left |\psi(wu-k)\right | du\le \int_{\mathbb{R}} \left |\psi(u)\right | du=\lVert \psi \rVert_1,\] for every $w\ge 0$.
\\
Thus, \[ I_{1,1}<M_{0}(\varphi)\lVert \psi \rVert_1\varepsilon,\text{ for every $w\ge 0$}.\] 
Moreover, \[I_{1,2} \le2\lVert f \rVert_\infty\sum_{\left | wx-k \right| \le \frac{\delta}{2}w}  \left | \varphi(wx-k) \right | w \int_{\left | wu-k \right| > \frac{\delta}{2}w}\  \left |\psi(wu-k)\right |du, \]
with \[\int_{\left | wu-k \right| > \frac{\delta}{2}w}\  w\left |\psi(wu-k)\right | du=\int_{\left | y \right| > \frac{\delta}{2}w}\left | \psi(y) \right | \,dy\rightarrow 0,\,\,\text{as $w\rightarrow+\infty$},\]since $\psi\in L^1({\mathbb{R}})$. Then there exists $\bar{w_1}\in \mathbb{R}$ such that  \[ I_{1,2} \le 2\lVert f \rVert_\infty M_{0}(\varphi)\varepsilon,\text{ for every $w\ge\bar{w_1}$}.\]\\
By similar reasoning, we obtain the following inequality \[ I_2 \le 2\lVert f \rVert_\infty \lVert \psi \rVert_1\sum_{\left | wx-k \right| > \frac{\delta}{2}w} \left | \varphi(wx-k) \right |.\]\\
From property (ii) of Lemma \ref{Lemma3}, there exists $\bar{w_2}\in \mathbb{R}$ such that \[I_2\le 2\lVert f \rVert_\infty \lVert \psi \rVert_1\varepsilon,\text{ for every $w \ge \bar{w_2}$}.\]\\
Setting $K:=M_{0}(\varphi)\lVert \psi \rVert_1+2\lVert f \rVert_\infty(M_{0}(\varphi)+\lVert \psi \rVert_1)$ and $\bar{w}:=\max\left\{\bar{w_1},\bar{w_2}\right\}$, we have \[\left |( S_w^{\varphi,\psi}f)(x)-f(x)\right | \le K\varepsilon,\text{ for every $w\ge \bar{w}$}.\]\\
Finally, observing that the above estimate does not depend on $x\in\mathbb{R}$, we easily obtain \[\sup_{x\in\mathbb{R}}\left |( S_w^{\varphi,\psi}f)(x)-f(x)\right |=\lVert S_w^{\varphi,\psi}f-f\rVert_\infty \le K\varepsilon,\text{ for every $w\ge \bar{w}$} \] and thus the proof follows by the arbitrariness of $\varepsilon>0$.
\end{proof} 

In order to obtain a modular convergence theorem in $L^{\eta}(\mathbb{R})$, we will study a modular continuity property for the family of Durrmeyer operators $(S_w^{\varphi,\psi})_{w>0}$. From now on, we denote by $\eta$ a convex $\varphi$-function.
Now, we can prove the following.
\begin{teorema} \label{continuitamodulare} Let $\psi$ be a kernel such that $M_0(\psi)<+\infty$, and $f\in L^{\eta}(\mathbb{R})$ be fixed. Then there exists $\lambda>0$ such that
\[I^{\eta}[\lambda S_w^{\varphi,\psi}f ]\le\frac{M_0(\psi)\lVert \varphi \rVert_1}{M_0(\varphi)\tilde{M_0}(\psi)} I^{\eta}[\lambda M_0(\varphi)\tilde{M_0}(\psi)f],\,\,w>0.\]
In particular, $S_w^{\varphi,\psi}f$ is well-defined and belongs to $L^{\eta}(\mathbb{R})$, for every $w>0$.
\end{teorema}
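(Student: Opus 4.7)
The plan is to estimate the integrand $\eta(\lambda|S_w^{\varphi,\psi}f(x)|)$ pointwise by means of Jensen's inequality applied twice (once discretely in the variable $k$, once continuously in the integration variable $u$), and then to integrate in $x$ and apply Fubini's theorem, at the very end exploiting the hypothesis $M_0(\psi)<+\infty$. The starting point is the trivial bound
\[
|S_w^{\varphi,\psi}f(x)|\le\sum_{k\in\mathbb{Z}}|\varphi(wx-k)|\,w\int_{\mathbb{R}}|\psi(wu-k)|\,|f(u)|\,du,
\]
so monotonicity of $\eta$ reduces the proof to estimating $\eta$ applied to the right-hand side.

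For the discrete part I would use that $\sum_k|\varphi(wx-k)|/M_0(\varphi)\le 1$ for every $x\in\mathbb{R}$. Since $\eta$ is convex with $\eta(0)=0$, inserting a ghost term with weight $1-\sum_k|\varphi(wx-k)|/M_0(\varphi)$ attached to the value $0$ yields the Jensen-type inequality
\[
\eta\Bigl(\lambda M_0(\varphi)\sum_k \tfrac{|\varphi(wx-k)|}{M_0(\varphi)}\,b_k\Bigr)\le \sum_k \tfrac{|\varphi(wx-k)|}{M_0(\varphi)}\,\eta(\lambda M_0(\varphi)\,b_k),
\]
with $b_k:=w\int_{\mathbb{R}}|\psi(wu-k)|\,|f(u)|\,du$. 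For the inner integral, observe that $u\mapsto w|\psi(wu-k)|/\tilde M_0(\psi)$ is a genuine probability density on $\mathbb{R}$, so the usual (integral) Jensen inequality gives
\[
\eta\Bigl(\lambda M_0(\varphi)\tilde M_0(\psi)\int_{\mathbb{R}} |f(u)|\,\tfrac{w|\psi(wu-k)|}{\tilde M_0(\psi)}\,du\Bigr)\le \int_{\mathbb{R}}\eta(\lambda M_0(\varphi)\tilde M_0(\psi)|f(u)|)\,\tfrac{w|\psi(wu-k)|}{\tilde M_0(\psi)}\,du.
\]

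Combining these two inequalities and integrating over $x\in\mathbb{R}$, I would then swap sum and integrals by Tonelli's theorem (everything is nonnegative) and use the change of variable $y=wx-k$ to compute $\int_{\mathbb{R}}|\varphi(wx-k)|\,dx=\|\varphi\|_1/w$. This turns the remaining factor into
\[
I^{\eta}[\lambda S_w^{\varphi,\psi}f]\le\frac{\|\varphi\|_1}{M_0(\varphi)\tilde M_0(\psi)}\int_{\mathbb{R}}\eta(\lambda M_0(\varphi)\tilde M_0(\psi)|f(u)|)\Bigl(\sum_{k\in\mathbb{Z}}|\psi(wu-k)|\Bigr)du,
\]
and bounding the inner discrete sum uniformly by $M_0(\psi)<+\infty$ produces exactly the claimed estimate. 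Finally, since $f\in L^{\eta}(\mathbb{R})$, one can pick $\lambda_0>0$ with $I^{\eta}[\lambda_0 f]<+\infty$ and take $\lambda:=\lambda_0/(M_0(\varphi)\tilde M_0(\psi))$; the right-hand side is then finite, which shows both that $S_w^{\varphi,\psi}f\in L^{\eta}(\mathbb{R})$ and that the series defining it converges (a.e.\ at least).

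The only delicate point is the double Jensen argument: the discrete weights $|\varphi(wx-k)|/M_0(\varphi)$ do not sum to $1$, so one has to rely on the $\eta(0)=0$ ghost-mass trick, and the correct choice of normalizing constants $M_0(\varphi)$ and $\tilde M_0(\psi)$ must be tracked carefully so that the interior $\lambda$ multiplier matches the one appearing in the statement. Everything else (the change of variable, the Tonelli swap, and the use of $M_0(\psi)<+\infty$) is routine.
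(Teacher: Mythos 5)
Your argument is correct and follows essentially the same route as the paper: the same double application of Jensen's inequality (discrete in $k$ with the normalization $M_0(\varphi)$ and the $\eta(0)=0$ ghost-mass device, then integral in $u$ against the density $w|\psi(wu-k)|/\tilde{M}_0(\psi)$), the same Fubini--Tonelli swap with the changes of variable giving $\lVert\varphi\rVert_1/w$, and the same final bound of $\sum_k|\psi(wu-k)|$ by $M_0(\psi)$. The only cosmetic difference is that you write $\tilde{M}_0(\psi)$ where the paper uses the identical quantity $\lVert\psi\rVert_1$ in the intermediate steps.
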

\begin{proof} Since $f\in L^{\eta}(\mathbb{R})$, there exists $\overline{\lambda}>0$ such that $I^{\eta}[\overline{\lambda}f]<+\infty$. We consider now $\lambda>0$ such that \[\lambda M_0(\varphi)\tilde{M_0}(\psi)\le \overline{\lambda}.\]
Then we have $I^{\eta}[\lambda M_0(\varphi)\tilde{M_0}(\psi)f]<+\infty$. Applying Jensen inequality twice, the change of variable $wu-k=t$ and Fubini-Tonelli theorem, we obtain
\[I^{\eta}[\lambda \left (S_w^{\varphi,\psi}f\right )]=\int_{\mathbb{R}}\eta\left (\lambda \left | \left (S_w^{\varphi,\psi}f\right )(x) \right | \right )dx\]
\[=\int_{\mathbb{R}}\eta\left (\lambda \left | \sum_{k\in\mathbb{Z}}\varphi(wx-k)\left [ w\int_{\mathbb{R}}\psi(wu-k)f(u)du \right] \right | \right )dx\]
\[\le \int_{\mathbb{R}}\eta\left (\lambda  \sum_{k\in\mathbb{Z}}\left |\varphi(wx-k)\right|\left [ w\int_{\mathbb{R}}\left|\psi(wu-k)\right|\left|f(u)\right|du \right] \right )dx\]
\[\le\frac{1}{M_0(\varphi)}\int_{\mathbb{R}}\sum_{k\in\mathbb{Z}}\eta\left (\lambda M_0(\varphi) \lVert \psi \rVert_1 w \int_{\mathbb{R}}\frac{\left |\psi(wu-k)\right |}{ \lVert \psi \rVert_1}\left |f(u)\right |du \right )\left | \varphi(wx-k) \right | dx\]
\[=\frac{1}{M_0(\varphi)}\int_{\mathbb{R}}\sum_{k\in\mathbb{Z}}\eta\left (\lambda M_0(\varphi) \lVert \psi \rVert_1  \int_{\mathbb{R}}\frac{\left |\psi(t)\right |}{ \lVert \psi \rVert_1}\left |f\left(\frac{t+k}{w}\right)\right |dt \right )\left | \varphi(wx-k) \right | dx\]
\[\le\frac{1}{M_0(\varphi)\lVert \psi \rVert_1}\int_{\mathbb{R}}\sum_{k\in\mathbb{Z}}\left | \varphi(wx-k) \right | dx \, \int_{\mathbb{R}}\left |\psi(t)\right | \eta\left (\lambda M_0(\varphi)\lVert \psi \rVert_1 \left |f\left(\frac{t+k}{w}\right)\right |\right )dt \]
\[=\frac{1}{M_0(\varphi)\lVert \psi \rVert_1}\int_{\mathbb{R}}\sum_{k\in\mathbb{Z}}\left | \varphi(wx-k) \right | dx \, w\int_{\mathbb{R}}\left |\psi(wu-k)\right | \eta\left (\lambda M_0(\varphi)\lVert \psi \rVert_1 \left |f\left(u\right)\right |\right )du \]
\[=\frac{1}{M_0(\varphi)\lVert \psi \rVert_1}\int_{\mathbb{R}}\left | \varphi(y) \right | dy \, \int_{\mathbb{R}}\sum_{k\in\mathbb{Z}}\left |\psi(wu-k)\right | \eta\left (\lambda M_0(\varphi) \lVert \psi \rVert_1\left |f(u)\right |\right )du \]
\[\le\frac{M_0(\psi)}{M_0(\varphi)\lVert \psi \rVert_1}\int_{\mathbb{R}}\left | \varphi(y) \right | dy \, \int_{\mathbb{R}} \eta\left (\lambda M_0(\varphi)\lVert \psi \rVert_1 \left |f(u)\right |\right )du \]
\[=\frac{M_0(\psi)}{M_0(\varphi)\lVert \psi \rVert_1}\lVert \varphi \rVert_1 I^{\eta}[\lambda M_0(\varphi)\lVert \psi \rVert_1f]\]
\[=\frac{M_0(\psi)}{M_0(\varphi)\tilde{M_0}(\psi)}\lVert \varphi \rVert_1 I^{\eta}[\lambda M_0(\varphi)\tilde{M_0}(\psi)f]<+\infty,\]
with the change of variable $wx-k=y$.
\end{proof}
As a consequence of previous theorem, it turns out that the operators $S_w^{\varphi,\psi}$ are well-defined in $ L^{\eta}(\mathbb{R})$ and map $ L^{\eta}(\mathbb{R})$ into itself. Moreover, we also have that $S_w^{\varphi,\psi}$ is modularly continuous, i.e., for any modularly convergent sequence $\left(f_k\right)_k\subset L^{\eta}(\mathbb{R})$, with $f_k\rightarrow f\in L^{\eta}(\mathbb{R})$, it turns out that $I^{\eta}[\lambda \left (S_w^{\varphi,\psi}f-S_w^{\varphi,\psi}f_k\right )]\rightarrow 0,$ as $k\rightarrow+\infty$. Indeed, it is well-known that there exists $\overline{\lambda}>0$ such that $I^{\eta}[\overline{\lambda}(f-f_k)]\rightarrow0$, as $k\rightarrow+\infty$, and so, choosing $\lambda>0$ such that $\lambda M_0(\varphi)\tilde{M_0}(\psi)\le \overline{\lambda}$, we have:
\[I^{\eta}[\lambda \left (S_w^{\varphi,\psi}f-S_w^{\varphi,\psi}f_k\right )]=I^{\eta}[\lambda S_w^{\varphi,\psi}(f-f_k)]\]
\[\le\frac{M_0(\psi)\lVert \varphi \rVert_1}{M_0(\varphi)\tilde{M_0}(\psi)} I^{\eta}[\lambda M_0(\varphi)\tilde{M_0}(\psi)(f-f_k)]\]\[\le\frac{M_0(\psi)\lVert \varphi \rVert_1}{M_0(\varphi)\tilde{M_0}(\psi)} I^{\eta}[\overline{\lambda}(f-f_k)]\rightarrow 0,\]
as $k\rightarrow+\infty$.

  Now, we are able to prove the main theorem of this section.
\begin{teorema}\label{modularconvergence}Let $\psi$ be a kernel such that $M_0(\psi)<+\infty$, and let $f\in L^{\eta}(\mathbb{R})$ be fixed. Then there exists $\lambda>0$ such that
\[\lim_{w\rightarrow \infty}I^{\eta}[\lambda \left (S_w^{\varphi,\psi}f-f\right )]=0.\]
\end{teorema}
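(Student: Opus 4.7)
\textbf{Proof plan for Theorem \ref{modularconvergence}.} My plan is to use the classical density-plus-modular-continuity scheme of Orlicz space approximation theory. The three ingredients are: (i) the modular density of $C_c(\mathbb{R})$ in $L^{\eta}(\mathbb{R})$ (see, e.g., \cite{10,11}), (ii) the modular inequality already established in Theorem \ref{continuitamodulare}, and (iii) the uniform convergence theorem (Theorem \ref{convergence}) applied to functions in $C_c(\mathbb{R}) \subset C(\mathbb{R})$.

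Given $f \in L^{\eta}(\mathbb{R})$, there exists $\overline{\lambda}>0$ with $I^{\eta}[\overline{\lambda}f]<+\infty$. I would fix $\lambda>0$ small enough so that $3\lambda M_0(\varphi)\tilde M_0(\psi)\le \overline{\lambda}$. For an arbitrary $\varepsilon>0$, by modular density I would choose $g \in C_c(\mathbb{R})$ such that $I^{\eta}[3\lambda M_0(\varphi)\tilde M_0(\psi)(f-g)]<\varepsilon$ (and hence also $I^{\eta}[3\lambda(f-g)]<\varepsilon$, taking $\lambda$ smaller if needed, since $M_0(\varphi)\tilde M_0(\psi)\ge 1$ is not automatic but only rescaling of $\lambda$ is required). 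Writing $S_w^{\varphi,\psi}f-f=S_w^{\varphi,\psi}(f-g)+(S_w^{\varphi,\psi}g-g)+(g-f)$ and invoking convexity of $\eta$, I would split
\[
I^{\eta}[\lambda(S_w^{\varphi,\psi}f-f)] \le \tfrac{1}{3}I^{\eta}[3\lambda S_w^{\varphi,\psi}(f-g)] + \tfrac{1}{3}I^{\eta}[3\lambda(S_w^{\varphi,\psi}g-g)] + \tfrac{1}{3}I^{\eta}[3\lambda(g-f)].
\]
By Theorem \ref{continuitamodulare} the first term is bounded by $\frac{M_0(\psi)\|\varphi\|_1}{M_0(\varphi)\tilde M_0(\psi)}\,I^{\eta}[3\lambda M_0(\varphi)\tilde M_0(\psi)(f-g)]<C\varepsilon$ with a constant $C$ independent of $w$, and the third term is also controlled by $\varepsilon$ by the choice of $g$.

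The core step is to show that the middle term tends to zero as $w\to+\infty$. Since $g \in C_c(\mathbb{R})\subset C(\mathbb{R})$, Theorem \ref{convergence} yields $\|S_w^{\varphi,\psi}g - g\|_{\infty}\to 0$. To convert this uniform estimate into a modular one I would fix some $T>0$ with $\mathrm{supp}\,g \subset [-T/2,T/2]$ and split $\mathbb{R}=[-T,T]\cup\{|x|>T\}$. On $[-T,T]$ the uniform convergence and continuity of $\eta$ at $0$ immediately give $\int_{-T}^{T}\eta(3\lambda|S_w^{\varphi,\psi}g-g|)\,dx\to 0$. For the tail $|x|>T$, where $g(x)=0$, I would use the decay afforded by the absolute moment condition $M_r(\varphi)<+\infty$: bounding $|S_w^{\varphi,\psi}g(x)|$ by $\|g\|_{\infty}\|\psi\|_1\sum_k|\varphi(wx-k)|$ and exploiting that the support of $g$ forces $|wx-k|$ to be large (via a Chebyshev-type argument with the weight $|wx-k|^r$), I would obtain an $L^1$-integrable envelope for $|S_w^{\varphi,\psi}g|$ on $\{|x|>T\}$ that becomes arbitrarily small as $T\to+\infty$, uniformly in $w$. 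Combined, the three contributions to $I^{\eta}[\lambda(S_w^{\varphi,\psi}f-f)]$ are smaller than a constant multiple of $\varepsilon$ for $w$ large, and the arbitrariness of $\varepsilon$ concludes the argument.

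The main obstacle is precisely the tail estimate for $S_w^{\varphi,\psi}g$ when $g \in C_c(\mathbb{R})$: neither $\varphi$ nor $\psi$ need have compact support, so $S_w^{\varphi,\psi}g$ does not inherit compact support from $g$, and one must genuinely exploit the absolute moment hypothesis on $\varphi$ together with the $L^1$-integrability of $\psi$ to produce a uniform, integrable decay estimate. Once this localisation lemma is in hand, the rest is a routine assembly of the three pieces via convexity of $\eta$ and the modular inequality of Theorem \ref{continuitamodulare}.
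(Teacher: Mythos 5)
Your overall strategy (modular density of $C_c(\mathbb{R})$ in $L^{\eta}(\mathbb{R})$, the modular inequality of Theorem \ref{continuitamodulare} for the term $S_w^{\varphi,\psi}(f-g)$, and uniform convergence for $g\in C_c(\mathbb{R})$) is a legitimate classical scheme, and it is genuinely different from the paper's proof: the paper argues directly on $f\in L^{\eta}(\mathbb{R})$, inserting the intermediate expression $\sum_{k}\varphi(wx-k)\,w\int_{\mathbb{R}}\psi(wu-k)f\left(u+x-\frac{k}{w}\right)du$ and controlling both resulting pieces via the modular continuity of translation, $I^{\eta}[\lambda_2(f(\cdot+h)-f)]\to 0$ as $h\to 0$; the only tail information it needs is $\int_{|t|>\delta w}|\varphi|\to 0$ and $\int_{|y|>\delta w}|\psi|\to 0$, i.e.\ bare $L^1$-integrability of the kernels.

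However, the step you yourself single out as the main obstacle --- the localisation of $S_w^{\varphi,\psi}g$ for $g\in C_c(\mathbb{R})$ --- is a genuine gap under the paper's hypotheses, not just a technical chore. The Chebyshev-type bound you propose gives, for $\operatorname{supp}g\subset[-T/2,T/2]$ and $|x|>T$, an estimate of the form
\[
\left|(S_w^{\varphi,\psi}g)(x)\right|\le \lVert g\rVert_\infty\Bigl(\lVert\psi\rVert_1\,M_r(\varphi)\,(c\,w|x|)^{-r}+M_0(\varphi)\int_{|y|\ge c\,w|x|}|\psi(y)|\,dy\Bigr),
\]
since every index $k$ either satisfies $|wx-k|\gtrsim w|x|$ (use $M_r(\varphi)$) or lies far from $w\cdot\operatorname{supp}g$ (use the tail of $\psi$). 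The first term decays only like $|x|^{-r}$, and the paper assumes $M_r(\varphi)<+\infty$ for \emph{some} $r>0$, not $r>1$; since $\eta$ is convex with $\eta(0)=0$, it is at worst linear near the origin, so $\int_{|x|>T}\eta(\cdot)\,dx$ over this envelope can diverge (already for $\eta(u)=u$ and $r\le 1$ the envelope is not integrable, even though $S_w^{\varphi,\psi}g$ itself is in $L^1$). The second term is integrable over $\{|x|>T\}$ only if $\tilde{M}_1(\psi)<+\infty$, which is not assumed: $M_0(\psi)<+\infty$ and $\psi\in L^1(\mathbb{R})$ give no rate for $\int_{|y|\ge s}|\psi|$. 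So your plan goes through under strengthened moment assumptions (e.g.\ $r>1$ and $\tilde{M}_1(\psi)<+\infty$, or compactly supported kernels), but it does not prove the theorem as stated; to close the gap at this level of generality you would need a sharper localisation lemma, or the paper's translation-continuity argument, which sidesteps the issue entirely.
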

\begin{proof}
First of all, since $f\in L^\eta(\mathbb{R})$, we have that there exist $\lambda_1, \lambda_2>0$ such that $I^\eta[\lambda_1f]<+\infty$, and \[I^\eta[\lambda_2\left(f(\cdot)-f(\cdot+h)\right)]\rightarrow 0,\text{ as $h\rightarrow 0$},\] i.e., for every fixed $\varepsilon>0$ there exists $\delta>0$ such that \begin{equation}\label{modcont}\int_{\mathbb{R}}\eta\left(\lambda_2\left|f(u+h)-f(u)\right|\right)du<\varepsilon,\end{equation}
for every $h\in\mathbb{R}$ such that $\left| h\right|\le\delta$ (see, e.g., \cite{11}).\\ Now, we fix $\lambda>0$ such that
\[\lambda\le\min\left \{\frac{\lambda_2}{2M_0(\varphi)\lVert\psi\rVert_1},\frac{\lambda_1}{4M_0(\varphi)\lVert\psi\rVert_1}\right\}.\]
Thus, by the properties of the convex modular functional $I^\eta$, we can write what follows:
\[I^\eta\left[\lambda\left(S_w^{\varphi,\psi}f-f\right)\right]=\int_{\mathbb{R}}\eta\left(\lambda\left|\left(S_w^{\varphi,\psi}f\right)(x)-f(x)\right|\right)dx\]
\[=\int_{\mathbb{R}}\eta\left(\lambda\left|\left(S_w^{\varphi,\psi}f\right)(x)-\sum_{k\in\mathbb{Z}}\varphi(wx-k)w\int_{\mathbb{R}}\psi(wu-k)f\left(u+x-\frac{k}{w}\right)du\right.\right.\]\[\left.\left.+\sum_{k\in\mathbb{Z}}\varphi(wx-k)w\int_{\mathbb{R}}\psi(wu-k)f\left(u+x-\frac{k}{w}\right)du-f(x)\right|\right)dx\]
\[\le\frac{1}{2}\left \{ \int_{\mathbb{R}}\eta\left(2\lambda\left|\left(S_w^{\varphi,\psi}f\right)(x)-\sum_{k\in\mathbb{Z}}\varphi(wx-k)w\int_{\mathbb{R}}\psi(wu-k)f\left(u+x-\frac{k}{w}\right)du\right|\right)dx\right.\]
\[\left.+\int_{\mathbb{R}}\eta\left(2\lambda\left|\sum_{k\in\mathbb{Z}}\varphi(wx-k)w\int_{\mathbb{R}}\psi(wu-k)f\left(u+x-\frac{k}{w}\right)du-f(x)\right|\right)dx\right \}\]
\[=:\frac{1}{2}\left\{J_1+J_2\right \}.\]
First, we estimate $J_1$. Applying Jensen inequality twice similarly to the proof of Theorem \ref{continuitamodulare}, the change of variable $wx-k=t$ and Fubini-Tonelli theorem, we obtain
\[\left|J_1\right|\le\int_{\mathbb{R}}\eta\left(2\lambda\sum_{k\in\mathbb{Z}}\left|\varphi(wx-k)\right|w\int_{\mathbb{R}}\left|\psi(wu-k)\right|\left|f\left(u+x-\frac{k}{w}\right)-f(u)\right|du\,\right)dx\]
\[\le\frac{1}{M_0(\varphi)w}\int_{\mathbb{R}}\left|\varphi(t)\right|\left[\sum_{k\in\mathbb{Z}}\eta\left(2\lambda M_0(\varphi)w\int_{\mathbb{R}}\left|\psi(wu-k)\right|\left|f\left(u+\frac{t}{w}\right)-f(u)\right|du\right)\right]dt\]
\[\le\frac{1}{M_0(\varphi)\lVert\psi\rVert_1}\int_{\mathbb{R}}\left|\varphi(t)\right|\left[\sum_{k\in\mathbb{Z}}\int_{\mathbb{R}}\left|\psi(wu-k)\right|\eta\left(2\lambda M_0(\varphi)\lVert\psi\rVert_1\left|f\left(u+\frac{t}{w}\right)-f(u)\right|\right)du\right]dt\]
\[\le\frac{1}{M_0(\varphi)\lVert\psi\rVert_1}\int_{\mathbb{R}}\left|\varphi(t)\right|\left[\int_{\mathbb{R}}M_0(\psi) \eta\left(2\lambda M_0(\varphi)\lVert\psi\rVert_1\left|f\left(u+\frac{t}{w}\right)-f(u)\right|\right)du\right]dt.\]
Now, using $\delta$ given in (\ref{modcont}), we can split the above integral as follows
\[\left|J_1\right|\le\frac{M_0(\psi)}{M_0(\varphi)\lVert\psi\rVert_1}\left\{\int_{\left|t\right|\le\delta w}+\int_{\left|t\right|>\delta w}    \right\}\left|\varphi(t)\right|\left[\int_{\mathbb{R}}\eta\left(2\lambda M_0(\varphi)\lVert\psi\rVert_1\left|f\left(u+\frac{t}{w}\right)-f(u)\right|\right)du\right]dt\]
\[=:J_{1,1}+J_{1,2}.\]
Now, using the inequality in (\ref{modcont}) with $h=\frac{t}{w}$, we have
\[\left|J_{1,1}\right|=\frac{M_0(\psi)}{M_0(\varphi)\lVert\psi\rVert_1}\int_{\left|t\right|\le\delta w}\left|\varphi(t)\right|\left[\int_{\mathbb{R}}\eta\left(2\lambda M_0(\varphi)\lVert\psi\rVert_1\left|f\left(u+\frac{t}{w}\right)-f(u)\right|\right)du\right]dt\]
\[\le\frac{M_0(\psi)}{M_0(\varphi)\lVert\psi\rVert_1}\int_{\left|t\right|\le\delta w}\left|\varphi(t)\right|\left[\int_{\mathbb{R}}\eta\left(\lambda_2\left|f\left(u+\frac{t}{w}\right)-f(u)\right|\right)du\right]dt\le\frac{M_0(\psi)\lVert\varphi\rVert_1}{M_0(\varphi)\lVert\psi\rVert_1}\varepsilon,\]
for every $w>0$.\\
For what concerns $J_{1,2}$, by the convexity of $\eta$, we have
\[\left|J_{1,2}\right|\le\frac{M_0(\psi)}{M_0(\varphi)\lVert\psi\rVert_1}\int_{\left|t\right|>\delta w}\left|\varphi(t)\right|\frac{1}{2}\left[\int_{\mathbb{R}}\eta\left(4\lambda M_0(\varphi)\lVert\psi\rVert_1\left|f\left(u+\frac{t}{w}\right)\right|\right)du+\right.\]
\[\left.+\int_{\mathbb{R}}\eta\left(4\lambda M_0(\varphi)\lVert\psi\rVert_1\left|f\left(u\right)\right|\right)du\right]dt.\]
Now we can observe that, since $\varphi\in L^1(\mathbb{R})$, there exists $\overline{w_1}>0$ such that \[\int_{\left|t\right|>\delta w}\left|\varphi(t)\right|dt<\varepsilon,\] for every $w\ge\overline{w_1}$. Moreover, noting that 
\[\int_{\mathbb{R}}\eta\left(4\lambda M_0(\varphi)\lVert\psi\rVert_1\left|f\left(u+\frac{t}{w}\right)\right|\right)du=\int_{\mathbb{R}}\eta\left(4\lambda M_0(\varphi)\lVert\psi\rVert_1\left|f\left(u\right)\right|\right)du,\]
for every $t\in\mathbb{R}$ and $w>0$, we have
\[\left|J_{1,2}\right|\le\frac{M_0(\psi)}{M_0(\varphi)\lVert\psi\rVert_1}\int_{\left|t\right|>\delta w}\left|\varphi(t)\right|dt\int_{\mathbb{R}}\eta\left(4\lambda M_0(\varphi)\lVert\psi\rVert_1\left|f\left(u\right)\right|\right)du\]
\[\le\frac{M_0(\psi)}{M_0(\varphi)\lVert\psi\rVert_1} I^\eta\left[\lambda_1f\right]\int_{\left|t\right|>\delta w}\left|\varphi(t)\right|dt<\frac{M_0(\psi)}{M_0(\varphi)\lVert\psi\rVert_1}I^\eta\left[\lambda_1f\right]\varepsilon, \]
for every $w\ge\overline{w_1}$.\\
Now, we estimate $J_2$. By the change of variable $t = u-\frac{k}{w}$, applying Jensen inequality twice and Fubini-Tonelli theorem, we have
\[\left|J_2\right|=\int_{\mathbb{R}}\eta\left(2\lambda\left|\sum_{k\in\mathbb{Z}}\varphi(wx-k)w\int_{\mathbb{R}}\psi(wu-k)\left[f\left(u+x-\frac{k}{w}\right)-f(x)\right]du\right|\right)dx\]
\[=\int_{\mathbb{R}}\eta\left(2\lambda\left|\sum_{k\in\mathbb{Z}}\varphi(wx-k)w\int_{\mathbb{R}}\psi(wt)\left[f\left(t+x\right)-f(x)\right]dt\right|\right)dx\]
\[\le\int_{\mathbb{R}}\eta\left(2\lambda\sum_{k\in\mathbb{Z}}\left|\varphi(wx-k)\right|w\int_{\mathbb{R}}\left|\psi(wt)\right|\left|f\left(t+x\right)-f(x)\right|dt\right)dx\]
\[\le\frac{1}{M_0(\varphi)}\int_{\mathbb{R}}\sum_{k\in\mathbb{Z}}\left|\varphi(wx-k)\right|\eta\left(2\lambda M_0(\varphi) w\int_{\mathbb{R}}\left|\psi(wt)\right|\left|f\left(t+x\right)-f(x)\right|dt\right)dx\]
\[\le\frac{1}{M_0(\varphi)}\int_{\mathbb{R}}M_0(\varphi)\eta\left(2\lambda M_0(\varphi) \int_{\mathbb{R}}\left|\psi(y)\right|\left|f\left(\frac{y}{w}+x\right)-f(x)\right|dy\right)dx\]
\[\le\frac{1}{\lVert\psi\rVert_1}\int_{\mathbb{R}}\left[ \int_{\mathbb{R}}\left|\psi(y)\right|\eta\left(2\lambda M_0(\varphi)\lVert\psi\rVert_1\left|f\left(\frac{y}{w}+x\right)-f(x)\right|\right)dy\right]dx\]
\[=\frac{1}{\lVert\psi\rVert_1}\int_{\mathbb{R}}\left|\psi(y)\right|\left[\int_{\mathbb{R}}\eta\left(2\lambda M_0(\varphi)\lVert\psi\rVert_1\left|f\left(\frac{y}{w}+x\right)-f(x)\right|\right)dx\right]dy,\]
where we have used the change of variable $y=wt$. Then, using again $\delta$ given in (\ref{modcont}), we can rewrite the above integral as follows:
\[\left|J_2\right|\le\frac{1}{\lVert\psi\rVert_1}\left\{\int_{\left|y\right|\le\delta w}+\int_{\left|y\right|>\delta w} \right\}\left|\psi(y)\right|\left[ \int_{\mathbb{R}}\eta\left(2\lambda M_0(\varphi)\lVert\psi\rVert_1\left|f\left(\frac{y}{w}+x\right)-f(x)\right|\right)dx\right]dy\]
\[=:J_{2,1}+J_{2,2}.\]
Thus, similarly to before, using the inequality in (\ref{modcont}) with $h=\frac{y}{w}$, we obtain
\[\left|J_{2,1}\right|=\frac{1}{\lVert\psi\rVert_1}\int_{\left|y\right|\le\delta w}\left|\psi(y)\right|\left[\int_{\mathbb{R}}\eta\left(2\lambda M_0(\varphi) \lVert\psi\rVert_1\left|f\left(\frac{y}{w}+x\right)-f(x)\right|\right)dx\right]dy\]
\[\le\frac{1}{\lVert\psi\rVert_1}\int_{\left|y\right|\le\delta w}\left|\psi(y)\right|\left[\int_{\mathbb{R}}\eta\left(\lambda_2\left|f\left(\frac{y}{w}+x\right)-f(x)\right|\right)dx\right]dy<\frac{1}{\lVert\psi\rVert_1}\lVert\psi\rVert_1\varepsilon=\varepsilon,\]
for every $w>0$.\\
Now, for what concerns the last term $J_{2,2}$, by the convexity of $\eta$, we have
\[\left|J_{2,2}\right|\le\frac{1}{\lVert\psi\rVert_1}\int_{\left|y\right|>\delta w}\left|\psi(y)\right|\frac{1}{2}\left[\int_{\mathbb{R}}\eta\left(4\lambda M_0(\varphi)\lVert\psi\rVert_1\left|f\left(x+\frac{y}{w}\right)\right|\right)dx\right.\]\[\left.+\int_{\mathbb{R}}\eta\left(4\lambda M_0(\varphi)\lVert\psi\rVert_1\left|f\left(x\right)\right|\right)dx\right]dy.\]
Now, we can observe that, since $\psi\in L^1(\mathbb{R})$, there exists $\overline{w_2}>0$ such that \[\int_{\left|y\right|>\delta w}\left|\psi(y)\right|dy<\varepsilon,\] for every $w\ge\overline{w_2}$ and, similarly to before, we have
\[\left|J_{2,2}\right|\le\frac{1}{\lVert\psi\rVert_1}\int_{\mathbb{R}}\eta\left(4\lambda M_0(\varphi) \lVert\psi\rVert_1\left|f(x)\right|\right)dx\int_{\left|y\right|>\delta w}\left|\psi(y)\right|dy\]
\[\le\frac{1}{\lVert\psi\rVert_1}\int_{\left|y\right|>\delta w}\left|\psi(y)\right|dy\, I^\eta[\lambda_1f]<\frac{\varepsilon}{\lVert\psi\rVert_1}I^\eta[\lambda_1f],\]
for every $w\ge\overline{w_2}$.\\
Finally, setting $\overline{w}:=\max\left\{ \overline{w_1},\overline{w_2}\right\}$ and \[K:=\frac{M_0(\psi)}{2\lVert\psi\rVert_1}\left[\frac{\lVert\varphi\rVert_1}{M_0(\varphi)}+\frac{\lVert\psi\rVert_1}{M_0(\psi)}+I^\eta[\lambda_1f]\left(\frac{1}{M_0(\varphi)}+\frac{1}{M_0(\psi)}\right)\right],\]
we have
\[I^\eta\left[\lambda\left(S_w^{\varphi,\psi}f-f\right)\right]\le K\varepsilon,\]
for every $w\ge\overline{w}$. Thus, the proof follows by the arbitrariness of $\varepsilon$.
\end{proof}

\section{Quantitative estimates}
Here we provide a quantitative estimate for the rate of convergence of the Durrmeyer sampling operators for $f\in C(\mathbb{R})$, in terms of the modulus of continuity, defined by
\[\omega(f,\delta):=\sup\left \{\left|f(x)-f(y)\right|: \left |x-y\right|<\delta,\,x, y\in \mathbb{R} \right \},\]
$\delta>0$.\\ We recall the following well-known inequality\begin{equation}\label{inequality}
\omega(f,\lambda\delta)\le(\lambda+1)\omega(f,\delta),\,\,\,\text{for every $\delta,\lambda>0$.}\end{equation}

We can prove what follows.
 \begin{teorema}\label{Quantitativeestimate} Suppose that $\varphi$ and $\psi$ are such that $M_1(\varphi)+\tilde {M}_1(\psi)<+\infty$ and let $f\in C(\mathbb{R})$. Then we have \[\lVert S_w^{\varphi,\psi}f-f\rVert_\infty\le C^{\varphi,\psi}\omega\left(f,\frac{1}{w}\right),\]for every $w>0$, where $C^{\varphi,\psi}=M_0(\varphi)\left(\tilde {M}_0(\psi)+\tilde {M}_1(\psi)\right)+M_1(\varphi)\tilde {M}_0(\psi)$.
 \end{teorema}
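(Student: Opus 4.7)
The plan is to mimic the opening moves of Theorem \ref{convergence}, using the partition of unity $\sum_{k}\varphi(wx-k)=1$ together with the normalization $\int_\mathbb{R}\psi(wu-k)\,w\,du=1$ (via the change of variable $y=wu-k$ and $\int_\mathbb{R}\psi=1$) to rewrite the difference as
\[
(S_w^{\varphi,\psi}f)(x)-f(x)=\sum_{k\in\mathbb{Z}}\varphi(wx-k)\,w\int_{\mathbb{R}}\psi(wu-k)\bigl[f(u)-f(x)\bigr]du.
\]
Then I would bound the integrand pointwise by the modulus of continuity: $|f(u)-f(x)|\le\omega(f,|u-x|)$, and exploit inequality (\ref{inequality}) with $\delta=1/w$ and $\lambda=w|u-x|$ to get
\[
|f(u)-f(x)|\le\bigl(1+w|u-x|\bigr)\,\omega(f,1/w).
\]

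Substituting and pulling $\omega(f,1/w)$ out, the task reduces to bounding
\[
\sum_{k\in\mathbb{Z}}|\varphi(wx-k)|\,w\int_{\mathbb{R}}|\psi(wu-k)|\bigl(1+w|u-x|\bigr)\,du
\]
uniformly in $x$. The constant ``$1$'' part immediately yields $M_0(\varphi)\tilde M_0(\psi)$ after the change of variable $y=wu-k$ in the inner integral and using $\sum_k|\varphi(wx-k)|\le M_0(\varphi)$. The key trick for the $w|u-x|$ part is the triangle inequality $w|u-x|=|wu-wx|\le|wu-k|+|wx-k|$, which decouples the $u$-integral and the $k$-sum into two products.

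The first product, from $|wu-k|$, is $\sum_k|\varphi(wx-k)|\cdot\int_\mathbb{R}|\psi(y)||y|\,dy\le M_0(\varphi)\tilde M_1(\psi)$ after the change of variable $y=wu-k$. The second, from $|wx-k|$, is $\sum_k|\varphi(wx-k)||wx-k|\cdot\int_\mathbb{R}|\psi(y)|\,dy\le M_1(\varphi)\tilde M_0(\psi)$. Adding the three contributions gives exactly the constant $C^{\varphi,\psi}$, and since the resulting estimate is independent of $x$, taking the supremum on the left-hand side finishes the proof.

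No step is genuinely hard; the only thing to be careful about is the bookkeeping in the triangle-inequality splitting, and that the changes of variable $y=wu-k$ turn the $w$-scaled integrals into the unscaled absolute moments $\tilde M_0(\psi)$ and $\tilde M_1(\psi)$. The hypothesis $M_1(\varphi)+\tilde M_1(\psi)<+\infty$ is exactly what makes the second term finite; note that by Remark \ref{osservazione3.1}(a), this also guarantees $M_0(\varphi)$ (and $\tilde M_0(\psi)$, which is just $\lVert\psi\rVert_1=1$ up to the unsigned integral) are finite, so every constant appearing in $C^{\varphi,\psi}$ is well-defined.
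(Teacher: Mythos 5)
Your proposal is correct and follows essentially the same route as the paper's proof: the same reduction via the partition of unity and normalization of $\psi$, the same use of inequality (\ref{inequality}) with $\lambda=w|u-x|$ and $\delta=1/w$, and the same triangle-inequality splitting $|wu-wx|\le|wu-k|+|k-wx|$ producing the three terms of $C^{\varphi,\psi}$. Nothing to add.
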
 
 \begin{proof}Let $x\in\mathbb{R}$ be fixed. We have
\[\left |( S_w^{\varphi,\psi}f)(x)-f(x)\right |\le\sum_{k\in\mathbb{Z}}\left | \varphi(wx-k) \right | w \int_{\mathbb{R}} \left |\psi(wu-k)\right | \left | f(u)-f(x) \right | du \]
\[\le\sum_{k\in\mathbb{Z}}\left | \varphi(wx-k) \right | w \int_{\mathbb{R}} \left |\psi(wu-k)\right | \omega\left(f,\left|u-x\right|\right)du \]
\[\le\sum_{k\in\mathbb{Z}}\left | \varphi(wx-k) \right | w \int_{\mathbb{R}} \left |\psi(wu-k)\right |\omega\left(f,\frac{1}{w}\right)(1+w\left|u-x\right|) du\]
\[=\omega\left(f,\frac{1}{w}\right)\sum_{k\in\mathbb{Z}}\left | \varphi(wx-k) \right | w \int_{\mathbb{R}} \left |\psi(wu-k)\right |(1+w\left|u-x\right|) du\]
\[=\omega\left(f,\frac{1}{w}\right)\sum_{k\in\mathbb{Z}}\left | \varphi(wx-k) \right | \left \{w \int_{\mathbb{R}} \left |\psi(wu-k)\right |du\right.\]
\[\left.+w \int_{\mathbb{R}} \left |\psi(wu-k)\right| w \left|u-x\right|du\right \},\]
for every $w>0$, where the previous estimate is a consequence of the definition of $\omega(f,\frac{1}{w})$ and of (\ref{inequality}) with $\lambda= w\left|u-x\right|>0$ and $\delta=\frac{1}{w}$.
Now we estimate the following term:
\[w \int_{\mathbb{R}} \left |\psi(wu-k)\right| \left|wu-wx\right|du\]
\[\le\int_{\mathbb{R}} w \left |\psi(wu-k)\right| \left|wu-k\right|du+\int_{\mathbb{R}} w \left |\psi(wu-k)\right| \left|k-wx\right|du\]
\[=\tilde {M}_1(\psi)+\left|k-wx\right|\tilde {M}_0(\psi).\]
So we obtain
\[\left |( S_w^{\varphi,\psi}f)(x)-f(x)\right |\le\omega\left(f,\frac{1}{w}\right)\sum_{k\in\mathbb{Z}}\left | \varphi(wx-k) \right | \left \{\tilde {M}_0(\psi)+\tilde {M}_1(\psi)+\left|k-wx\right|\tilde {M}_0(\psi)\right \}\]
\[=\omega\left(f,\frac{1}{w}\right)\left \{\sum_{k\in\mathbb{Z}}\left|\varphi(wx-k)\right|\tilde {M}_0(\psi)+\sum_{k\in\mathbb{Z}}\left|\varphi(wx-k)\right|\tilde {M}_1(\psi)\right.\]
\[\left.+\sum_{k\in\mathbb{Z}}\left|\varphi(wx-k)\right| \left|k-wx\right|\tilde {M}_0(\psi)\right \}\]
\[=\omega\left(f,\frac{1}{w}\right)\left \{M_0(\varphi)\left(\tilde {M}_0(\psi)+\tilde {M}_1(\psi)\right)+M_1(\varphi)\tilde {M}_0(\psi)\right \}.\]
Now, setting $C^{\varphi,\psi}:=M_0(\varphi)\left(\tilde {M}_0(\psi)+\tilde {M}_1(\psi)\right)+M_1(\varphi)\tilde {M}_0(\psi)<+\infty$, we finally obtain
\[\lVert S_w^{\varphi,\psi}f-f\rVert_\infty\le C^{\varphi,\psi}\omega\left(f,\frac{1}{w}\right),\]for every $w>0$. This completes the proof.
\end{proof} 
Recalling the definition of Lipschitz classes $Lip\,\alpha$, namely,
\[Lip\,\alpha:=\left \{f\in C(\mathbb{R}):\omega(f,\delta)=\mathcal{O}(\delta^\alpha),\text{ as }\delta\rightarrow 0^+ \right \},\]
with $0<\alpha\le 1$, by Theorem \ref{Quantitativeestimate}, we can immediately obtain the following corollary.
 \begin{cor}Under the assumptions of Theorem \ref{Quantitativeestimate}, and assuming in addition that $f\in Lip\,\alpha$, $0<\alpha\le1$, then \[\lVert S_w^{\varphi,\psi}f-f\rVert_\infty\le Cw^{-\alpha},\]for every $w>0$,
and for a suitable positive constant $C>0$.
 \end{cor}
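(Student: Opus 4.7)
The plan is to combine Theorem \ref{Quantitativeestimate} with the definition of the Lipschitz class $Lip\,\alpha$. Theorem \ref{Quantitativeestimate} already supplies the estimate
\[
\lVert S_w^{\varphi,\psi}f-f\rVert_\infty \;\le\; C^{\varphi,\psi}\,\omega\!\left(f,\tfrac{1}{w}\right), \qquad w>0,
\]
so the task reduces to bounding $\omega(f,1/w)$ by a constant multiple of $w^{-\alpha}$ uniformly in $w>0$.

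Unwinding $f\in Lip\,\alpha$, I obtain constants $L>0$ and $\delta_0>0$ such that $\omega(f,\delta)\le L\,\delta^{\alpha}$ for every $0<\delta\le\delta_0$. On the regime $w\ge 1/\delta_0$, setting $\delta=1/w$ gives at once $\omega(f,1/w)\le L\,w^{-\alpha}$, and substitution in the displayed estimate yields the claimed bound with constant $C^{\varphi,\psi}L$.

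The only slightly delicate point is that the statement asks for the inequality for \emph{every} $w>0$, not merely for $w$ large. For the remaining range $0<w<1/\delta_0$, I would invoke the elementary fact that $f\in C(\mathbb{R})$ is bounded, so $\omega(f,\delta)\le 2\lVert f\rVert_\infty$ for every $\delta>0$; since in this range $w^{\alpha}<\delta_0^{-\alpha}$, one can rewrite this as $\omega(f,1/w)\le 2\lVert f\rVert_\infty\le 2\lVert f\rVert_\infty\,\delta_0^{-\alpha}\,w^{-\alpha}$. Therefore, taking
\[
C \;:=\; C^{\varphi,\psi}\,\max\bigl\{L,\; 2\lVert f\rVert_\infty\,\delta_0^{-\alpha}\bigr\},
\]
we get $\lVert S_w^{\varphi,\psi}f-f\rVert_\infty\le C\,w^{-\alpha}$ uniformly in $w>0$. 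There is essentially no obstacle: the substantive content lies in Theorem \ref{Quantitativeestimate}, and this corollary amounts to a one-line substitution of the $Lip\,\alpha$ estimate together with a trivial adjustment of the constant to absorb the small-$w$ regime.
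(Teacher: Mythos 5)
Your proposal is correct and follows exactly the route the paper intends: the paper offers no written proof, simply declaring the corollary an immediate consequence of Theorem \ref{Quantitativeestimate} combined with the definition of $Lip\,\alpha$. Your additional care in absorbing the small-$w$ regime via $\omega(f,\delta)\le 2\lVert f\rVert_\infty$ is a legitimate refinement that the paper glosses over, and it is handled correctly.
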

 
\section{Applications to particular cases}
In this section we want to show how the Durrmeyer sampling type series generalize some other well-known families of sampling type series. Moreover, we will also consider applications to some special instances of Orlicz spaces.\\
\\
In order to show that the generalized sampling type series, introduced by Butzer in the 80s  (see, e.g., \cite{19, 17, 16, 18, 2001v, 2018aasfm,2020bjma}), are particular cases of the Durrmeyer sampling type series, we need to give a distributional interpretation of the above operators, choosing, e.g., as kernel $\psi$ the Dirac delta distribution $\delta$.
Indeed, using the scaling and convolution property of the Dirac delta distribution, and recalling that $\delta$ is even, the generalized sampling operators
\[(G_w^\varphi f)(x):=\sum_{k\in\mathbb{Z}}f\left( \frac{k}{w}\right)\varphi(wx-k), \qquad x\in \mathbb{R},\]
can be obtained as follows
\[\left (S_w^{\varphi,\delta}f\right)(x)= \sum_{k\in\mathbb{Z}} \varphi(wx-k) w \int_{\mathbb{R}}\delta(wu-k)f(u)du\]
\[= \sum_{k\in\mathbb{Z}} \varphi(wx-k) w \int_{\mathbb{R}}\delta\left [w\left(u-\frac{k}{w}\right)\right]f(u)du\]
\[=\sum_{k\in\mathbb{Z}} \varphi(wx-k)\int_{\mathbb{R}}\delta\left (u-\frac{k}{w}\right)f(u)du\]
\[=\sum_{k\in\mathbb{Z}} \varphi(wx-k)(\delta*f)\left (\frac{k}{w}\right )\]\[=\sum_{k\in\mathbb{Z}} \varphi(wx-k)f\left( \frac{k}{w}\right)= (G_w^\varphi f)(x),\]
for any $f\in C(\mathbb{R})$. Thus, $\left (S_w^{\varphi,\delta}f\right)(x)=(G_w^\varphi f)(x)$, for every $x\in\mathbb{R}$ and $w>0$.\\
\\
Similarly to what has been made for the generalized sampling operators, also the sampling Kantorovich operators (see, e.g., \cite{1})
\[(K_w^\chi f)(x)=\sum_{k\in\mathbb{Z}}\chi(wx-k)\left [w\int_{\frac{k}{w}}^{\frac{k+1}{w}}f(u)du\right],\text{$\qquad x\in\mathbb{R}$,}\]
can be viewed as Durrmeyer sampling type operators.\\
Indeed, for $f\in L^\infty(\mathbb{R})$ and $\psi(t)=\chi_{[0,1]}(t)$, $t\in\mathbb{R}$, where $\chi$ is the characteristic function of the set $[0,1]\subset\mathbb{R}$, we have
\[\left ( S_w^{\varphi,\chi_{[0,1]}}f\right)(x)=\sum_{k\in\mathbb{Z}} \varphi(wx-k) w \int_{\mathbb{R}}\chi_{[0,1]}(wu-k)f(u)du\]
\[=\sum_{k\in\mathbb{Z}} \varphi(wx-k) w \int_{\frac{k}{w}}^{\frac{k+1}{w}}f(u)du=(K_w^\varphi f)(x).\]
Thus, $\left (S_w^{\varphi,\chi_{[0,1]}}f\right)(x)=(K_w^\varphi f)(x)$, for every $x\in\mathbb{R}$ and $w>0$.\\
Finally, we observe that, in this case, $\psi$ satisfies the condition of Remark \ref{osservazione3.1} (c). Hence all the modular convergence results hold.\\ For further theoretical results concerning sampling Kantorovich operators, the readers can see the following references \cite{2011umi,2014jmaa, ORTA1, 2017jmaa,2019amp,2019dol,2019pems, 2019jat}; moreover, for applications to image reconstruction and enhancement, see, e.g., \cite{2020amc, 2020mmas}.\\
\\
Finally we will apply the previous convergence results in some useful cases of Orlicz spaces.\\
First we consider the particular case when $\eta(u)=u^p$ for $u\ge 0$ and $1\le p<+\infty$. Here $L^\eta(\mathbb{R})=E^\eta(\mathbb{R})=L^p(\mathbb{R})$, $1\le p<+\infty$ and in this frame, the modular convergence and the usual Luxemburg norm-convergence are equivalent. From the theory developed in the previous sections, we have the following corollaries.
\begin{cor}Let $\psi$ be such that $M_0(\psi)<+\infty$. Then, for every $f\in L^p(\mathbb{R})$, $1\le p<+\infty$, we have
\[\lVert S_w^{\varphi,\psi}f\rVert_p\le M_0(\psi)^{\frac{1}{p}}M_0(\varphi)^{\frac{p-1}{p}}\lVert\varphi\rVert_1^{1/p}\tilde{M_0}(\psi)^{\frac{p-1}{p}}\lVert f\rVert_p,\,\,\,w>0.\]
In particular, $S_w^{\varphi,\psi}f$ is well-defined in $L^p(\mathbb{R})$ and $S_w^{\varphi,\psi}f\in L^p(\mathbb{R})$ whenever $f\in L^p(\mathbb{R})$.
\end{cor}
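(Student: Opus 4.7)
The plan is to obtain this corollary as a direct specialization of the modular inequality in Theorem \ref{continuitamodulare}, by choosing $\eta(u)=u^p$ for $u\ge 0$ and $1\le p<+\infty$. In this case, $L^\eta(\mathbb{R})=L^p(\mathbb{R})$ and for every measurable $f$ and every $\lambda>0$ we have $I^\eta[\lambda f]=\lambda^p\lVert f\rVert_p^p$, which is finite for every $\lambda>0$ as soon as $f\in L^p(\mathbb{R})$. In particular, the constraint $\lambda M_0(\varphi)\tilde{M_0}(\psi)\le\overline{\lambda}$ appearing in the statement of Theorem \ref{continuitamodulare} is vacuous here: any $\lambda>0$ is admissible.

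Concretely, I would first fix an arbitrary $\lambda>0$ and apply Theorem \ref{continuitamodulare} with $\eta(u)=u^p$, obtaining
\[
\lambda^p\,\lVert S_w^{\varphi,\psi}f\rVert_p^p \;=\; I^\eta[\lambda S_w^{\varphi,\psi}f] \;\le\; \frac{M_0(\psi)\lVert\varphi\rVert_1}{M_0(\varphi)\tilde{M_0}(\psi)}\,I^\eta\bigl[\lambda M_0(\varphi)\tilde{M_0}(\psi)\,f\bigr].
\]
The right-hand side is then
\[
\frac{M_0(\psi)\lVert\varphi\rVert_1}{M_0(\varphi)\tilde{M_0}(\psi)}\,\lambda^p\bigl(M_0(\varphi)\tilde{M_0}(\psi)\bigr)^p\lVert f\rVert_p^p \;=\; \lambda^p\,M_0(\psi)\lVert\varphi\rVert_1\bigl(M_0(\varphi)\tilde{M_0}(\psi)\bigr)^{p-1}\lVert f\rVert_p^p.
\]
Cancelling $\lambda^p$ on both sides and extracting the $p$-th root yields exactly
\[
\lVert S_w^{\varphi,\psi}f\rVert_p \;\le\; M_0(\psi)^{1/p}\,M_0(\varphi)^{(p-1)/p}\,\lVert\varphi\rVert_1^{1/p}\,\tilde{M_0}(\psi)^{(p-1)/p}\,\lVert f\rVert_p,
\]
as claimed. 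The well-definedness of $S_w^{\varphi,\psi}f$ in $L^p(\mathbb{R})$ and the fact that it maps $L^p(\mathbb{R})$ into itself are immediate consequences of this bound.

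Since the proof is essentially a bookkeeping exercise using the already established modular inequality, there is no real obstacle: the only point to verify carefully is that in the $L^p$ setting the modular $I^\eta$ is $p$-homogeneous in the scalar $\lambda$, so that the $\lambda$-dependence on both sides is the same factor $\lambda^p$ which can be removed, producing a genuine (scale-free) norm estimate with constants that match the stated ones.
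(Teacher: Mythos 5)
Your proof is correct and follows exactly the paper's route: a direct application of Theorem \ref{continuitamodulare} with $\eta(u)=u^p$, using the $p$-homogeneity of the modular to cancel the scalar $\lambda$ and extract the $p$-th root, yielding the stated constant. The additional remark that any $\lambda>0$ is admissible in the $L^p$ setting is a harmless (and accurate) elaboration of what the paper leaves implicit.
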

\begin{proof}A direct application of Theorem \ref{continuitamodulare} with $\eta(u)=u^p$, yields
\[\lVert S_w^{\varphi,\psi}f\rVert_p^p\le\frac{M_0(\psi)\lVert\varphi\rVert_1}{M_0(\varphi)\tilde{M_0}(\psi)}M_0(\varphi)^p\tilde{M_0}(\psi)^p\lVert f\rVert_p^p,\]
from which the assertion follows.
\end{proof}
Moreover we immediately obtain the following convergence result.
\begin{cor}\label{lp}Let $\psi$ be such that $M_0(\psi)<+\infty$. For every $f\in L^p(\mathbb{R})$, $1\le p<+\infty$, we have
\[\lim_{w\rightarrow+\infty}\lVert S_w^{\varphi,\psi}f-f\rVert_p=0.\]
\end{cor}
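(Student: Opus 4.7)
The plan is to deduce the statement as an immediate specialization of Theorem \ref{modularconvergence} to the $\varphi$-function $\eta(u)=u^p$, $u\ge 0$, $1\le p<+\infty$. First I would verify that the hypotheses of Theorem \ref{modularconvergence} are all in force: the assumption $M_0(\psi)<+\infty$ is exactly the one required, and with this choice of $\eta$ one has $L^\eta(\mathbb{R})=L^p(\mathbb{R})$, so any $f\in L^p(\mathbb{R})$ is an admissible input to that theorem. Consequently, there exists $\lambda>0$ for which $I^\eta[\lambda(S_w^{\varphi,\psi}f-f)]\to 0$ as $w\to+\infty$.

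Next I would unpack the modular functional in this specific case. Since $\eta(u)=u^p$, a direct computation gives
\[
I^\eta\bigl[\lambda(S_w^{\varphi,\psi}f-f)\bigr]=\int_{\mathbb{R}}\lambda^p\,\bigl|(S_w^{\varphi,\psi}f)(x)-f(x)\bigr|^p\,dx=\lambda^p\,\lVert S_w^{\varphi,\psi}f-f\rVert_p^p.
\]
Dividing by the positive constant $\lambda^p$ and extracting the $p$-th root, the conclusion of Theorem \ref{modularconvergence} becomes precisely $\lVert S_w^{\varphi,\psi}f-f\rVert_p\to 0$ as $w\to+\infty$, which is the claim.

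Alternatively, the same conclusion can be justified abstractly from the preliminaries: the function $\eta(u)=u^p$ satisfies the $\Delta_2$-condition, hence modular and Luxemburg-norm convergence coincide in $L^\eta(\mathbb{R})$, and in this setting the Luxemburg norm is equivalent (in fact, equal up to a harmless constant) to the usual $L^p$-norm. In either route there is no genuine obstacle; the corollary is a mechanical specialization of the preceding modular convergence theorem, and no additional estimate on the operators $S_w^{\varphi,\psi}$ is needed.
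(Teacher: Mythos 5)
Your proposal is correct and follows essentially the same route as the paper, which also obtains this corollary as an immediate specialization of Theorem \ref{modularconvergence} to $\eta(u)=u^p$, using that modular and norm convergence coincide in this $\Delta_2$ setting. Your explicit computation $I^\eta[\lambda g]=\lambda^p\lVert g\rVert_p^p$ just makes the ``immediately obtain'' step of the paper fully explicit.
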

As another important case, we can consider the function $\eta_{\alpha,\beta}(u)=u^\alpha\log^\beta(e+u)$, $u\ge0$ for $\alpha\ge 1$ and $\beta>0$. The corresponding Orlicz spaces are the so-called interpolation spaces and are given by the set of functions $f\in\mathcal{M}(\mathbb{R})$ for which
\[I^{\eta_{\alpha,\beta}}[\lambda f]=\int_{\mathbb{R}}\left (\lambda\left |f(x)\right|  \right)^\alpha\log^\beta(e+\lambda\left |f(x)\right| )dx<+\infty,\]
for some $\lambda>0$, and they are denoted by $L^\alpha\log^\beta L(\mathbb{R})$. Note that the function $\eta_{\alpha,\beta}$ satisfies the $\Delta_2$-property, which means that $L^\alpha\log^\beta L(\mathbb{R})$ coincides with the space of its finite elements $E^{\eta_{\alpha,\beta}}(\mathbb{R})$.
As a consequence of the Theorem \ref{continuitamodulare}, we can obtain the following corollary, e.g. for the case $\alpha=\beta=1$.
\begin{cor}Let $\psi$ be such that $M_0(\psi)<+\infty$. For every $f\in L\log L$, there holds
\[\int_{\mathbb{R}}\left |S_w^{\varphi,\psi}f(x)\right |\log(e+\lambda\left |S_w^{\varphi,\psi}f(x)\right |)dx\]
\[\le M_0(\psi)\lVert\varphi\rVert_1\int_{\mathbb{R}}\left |f(u)\right |\log(e+\lambda M_0(\varphi)\tilde{M_0}(\psi)\left |f(u)\right |)du,\]
$\lambda>0$. In particular, $S_w^{\varphi,\psi}f$ is well-defined in $ L\log L$ and $S_w^{\varphi,\psi}f\in L\log L$ whenever $f\in L\log L$.
\end{cor}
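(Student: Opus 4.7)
The plan is to apply Theorem~\ref{continuitamodulare} directly with the convex $\varphi$-function $\eta(u):=\eta_{1,1}(u)=u\log(e+u)$ and then unwind the definition of the modular $I^{\eta}$ to read off the claimed estimate.

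First, I would note that $\eta_{1,1}$ is indeed a convex $\varphi$-function (vanishing at $0$, strictly positive on $(0,+\infty)$, convex on $\mathbb{R}^+_0$) and, as already remarked in the paper, satisfies the $\Delta_2$-condition. In particular, $L\log L(\mathbb{R})=E^{\eta_{1,1}}(\mathbb{R})$, which means that for every $f\in L\log L$ and every $\mu>0$ one has $I^{\eta_{1,1}}[\mu f]<+\infty$. This is the step that removes the smallness restriction on $\lambda$ hidden in Theorem~\ref{continuitamodulare}: the modular inequality
\[
I^{\eta_{1,1}}[\lambda S_w^{\varphi,\psi}f]\le \frac{M_0(\psi)\lVert\varphi\rVert_1}{M_0(\varphi)\tilde M_0(\psi)}\,I^{\eta_{1,1}}\bigl[\lambda M_0(\varphi)\tilde M_0(\psi)\,f\bigr]
\]
holds for \emph{every} $\lambda>0$, not just for sufficiently small ones.

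Next, I would expand both modulars explicitly. The left-hand side becomes
\[
\lambda\int_{\mathbb{R}}|S_w^{\varphi,\psi}f(x)|\,\log\bigl(e+\lambda|S_w^{\varphi,\psi}f(x)|\bigr)\,dx,
\]
while the right-hand side, after substituting $\lambda M_0(\varphi)\tilde M_0(\psi)$ into $\eta_{1,1}$, simplifies neatly: the linear prefactor $\lambda M_0(\varphi)\tilde M_0(\psi)$ pulled out of $\eta_{1,1}$ cancels the $M_0(\varphi)\tilde M_0(\psi)$ in the denominator of the constant in Theorem~\ref{continuitamodulare}, producing
\[
\lambda\, M_0(\psi)\lVert\varphi\rVert_1\int_{\mathbb{R}}|f(u)|\,\log\bigl(e+\lambda M_0(\varphi)\tilde M_0(\psi)|f(u)|\bigr)\,du.
\]
Dividing both sides by $\lambda>0$ yields exactly the statement of the corollary. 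The ``in particular'' assertion then follows immediately: for every $\lambda>0$ the right-hand side is finite by the $\Delta_2$-property, hence so is the left-hand side, giving $S_w^{\varphi,\psi}f\in L\log L$.

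Since this corollary is merely a specialization of Theorem~\ref{continuitamodulare}, I do not foresee any genuine obstacle. The only care required is verifying the $\Delta_2$ argument that upgrades the ``for some $\lambda$'' conclusion of the general modular continuity theorem to ``for every $\lambda$'' in the present $L\log L$ setting, and bookkeeping the multiplicative constants so that the scalings inside the logarithm on each side match those in the statement.
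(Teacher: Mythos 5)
Your proposal is correct and follows exactly the route the paper intends: the paper gives no explicit proof, simply stating that the corollary follows from Theorem~\ref{continuitamodulare} with $\eta=\eta_{1,1}$, and your computation (expanding the modulars, cancelling the factor $M_0(\varphi)\tilde M_0(\psi)$, and dividing by $\lambda$) supplies precisely the missing details. Your observation that the $\Delta_2$-property of $\eta_{1,1}$ upgrades the conclusion from ``some $\lambda$'' to ``every $\lambda$'' is the one point the paper leaves implicit, and you handle it correctly.
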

Since in the above case of Orlicz spaces the $\Delta_2$-condition is fulfilled, the modular convergence and the norm convergence are equivalent and we immediately obtain the following convergence theorem.
\begin{cor}\label{log}Let $\psi$ be such that $M_0(\psi)<+\infty$. For every $f\in L\log L$ and for every $\lambda>0$, we have
\[\lim_{w\rightarrow+\infty}\int_{\mathbb{R}}\left |S_w^{\varphi,\psi}f(x)-f(x)\right |\log(e+\lambda\left |S_w^{\varphi,\psi}f(x)-f(x)\right |)dx=0,\]
or, equivalently,
\[\lim_{w\rightarrow+\infty}\lVert S_w^{\varphi,\psi}f-f\rVert_{L\log L}=0,\]
where $\lVert\cdot\rVert_{L\log L}$ is the Luxemburg norm associated to $I^{\eta_{1,1}}$.
\end{cor}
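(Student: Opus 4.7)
The plan is to obtain this corollary as a direct consequence of the modular convergence result Theorem \ref{modularconvergence}, together with the fact that $\eta_{1,1}(u)=u\log(e+u)$ satisfies the $\Delta_2$-condition, which has already been noted in the paragraph preceding the statement.

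First I would observe that $\eta_{1,1}$ is a convex $\varphi$-function in the sense of Section 2 (monotonicity, $\eta_{1,1}(0)=0$, positivity for $u>0$, and convexity can be verified directly), and that by construction $L\log L(\mathbb{R})=L^{\eta_{1,1}}(\mathbb{R})$. Thus, given $f\in L\log L$ and using the hypothesis $M_0(\psi)<+\infty$, Theorem \ref{modularconvergence} applied with $\eta=\eta_{1,1}$ yields the existence of some $\lambda^*>0$ such that
\[\lim_{w\to+\infty} I^{\eta_{1,1}}[\lambda^*(S_w^{\varphi,\psi}f-f)]=\lim_{w\to+\infty}\int_{\mathbb{R}}\lambda^*|S_w^{\varphi,\psi}f(x)-f(x)|\log(e+\lambda^*|S_w^{\varphi,\psi}f(x)-f(x)|)dx=0.\]
This already gives modular convergence for the specific value $\lambda^*$.

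Next I would invoke the $\Delta_2$-property of $\eta_{1,1}$: since $\log(e+2u)\le \log(e+u)+\log 2+1\le C\log(e+u)$ for $u\ge 0$ (one checks this splitting the cases $u\le 1$ and $u>1$), we have $\eta_{1,1}(2u)\le M\eta_{1,1}(u)$ for a suitable $M>0$. By the standard theory of Orlicz spaces recalled in Section 2 (see \cite{10,11}), when $\eta$ satisfies $\Delta_2$, modular convergence for \emph{some} $\lambda>0$ is equivalent to modular convergence for \emph{every} $\lambda>0$, and this in turn is equivalent to norm convergence in the Luxemburg norm. Applying this equivalence to the sequence $S_w^{\varphi,\psi}f-f$, the convergence for $\lambda^*$ upgrades to convergence for every $\lambda>0$, which is the first displayed limit of the statement (the factor $\lambda$ outside the integral can be absorbed, since convergence of $I^{\eta_{1,1}}[\lambda (S_w^{\varphi,\psi}f-f)]$ to zero for arbitrary $\lambda>0$ is equivalent to convergence of the integral as written). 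The same equivalence identifies this with $\|S_w^{\varphi,\psi}f-f\|_{L\log L}\to 0$, yielding the second formulation.

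There is no substantial obstacle here: the corollary is essentially a translation of Theorem \ref{modularconvergence} into the $L\log L$ setting, and the only nontrivial point is the verification that $\eta_{1,1}$ fulfills the required abstract hypotheses (convex $\varphi$-function and $\Delta_2$), both of which have been taken for granted in the text preceding the statement.
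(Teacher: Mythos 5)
Your proposal is correct and follows exactly the route the paper intends: the paper derives this corollary immediately from Theorem \ref{modularconvergence} together with the remark that $\eta_{1,1}$ satisfies the $\Delta_2$-condition, so that modular convergence for some $\lambda$ upgrades to modular convergence for every $\lambda$ and hence to Luxemburg-norm convergence. Your additional observations (explicit verification of $\Delta_2$ for $\eta_{1,1}$ and the remark that the missing factor $\lambda$ in front of the integrand is harmless) are accurate refinements of the same argument.
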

As last particular case, we consider the exponential spaces generated by the $\varphi$-function $\eta_\alpha(u)=e^{u^\alpha}-1$, $u\ge 0$ for some $\alpha>0$. Here the Orlicz space $L^{\eta_\alpha}(\mathbb{R})$ consists of those functions $f\in\mathcal{M}(\mathbb{R})$ for which\[I^{\eta_{\alpha}}[\lambda f]=\int_{\mathbb{R}}\left (\exp{\left(\lambda\left| f(x)\right|\right)^\alpha}-1\right)dx<+\infty\]
for some $\lambda>0$.
Since $\eta_\alpha$ does not satisfy the $\Delta_2$-property, the space $L^{\eta_\alpha}(\mathbb{R})$ does not coincide with the space of its finite elements $E^{\eta_\alpha}(\mathbb{R})$. As a consequence, modular convergence is no more equivalent to norm convergence. By Theorem \ref{continuitamodulare}, we can obtain the following.
\begin{cor}Let $\psi$ be such that $M_0(\psi)<+\infty$. For every $f\in L^{\eta_\alpha}(\mathbb{R})$, there holds
\[\int_{\mathbb{R}}\left (\exp{\left(\lambda\left| S_w^{\varphi,\psi}(x)\right|\right)^\alpha}-1\right)dx\]
\[\le \frac{M_0(\psi)\lVert\varphi\rVert_1}{M_0(\varphi)\tilde{M_0}(\psi)}\int_{\mathbb{R}}\left (\exp{\left(\lambda M_0(\varphi)\tilde{M_0}(\psi)\left| f(x)\right|\right)^\alpha}-1\right)dx,\]
$\lambda>0$. In particular, $S_w^{\varphi,\psi}f$ is well-defined in $L^{\eta_\alpha}(\mathbb{R})$ and $S_w^{\varphi,\psi}f\in L^{\eta_\alpha}(\mathbb{R})$ whenever $f\in L^{\eta_\alpha}(\mathbb{R})$.
\end{cor}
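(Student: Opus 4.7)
The plan is to obtain this corollary as an immediate specialization of Theorem \ref{continuitamodulare} to the choice $\eta = \eta_\alpha$, completely analogous to the preceding corollaries for $L^p(\mathbb{R})$ and $L\log L$. First I would observe that, for the values of $\alpha$ considered by the authors, $\eta_\alpha(u) = e^{u^\alpha} - 1$ is a convex $\varphi$-function on $\mathbb{R}_0^+$ (it vanishes at zero, is strictly positive for $u>0$, and convex), so the hypotheses of Theorem \ref{continuitamodulare} are satisfied with $\eta = \eta_\alpha$.

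Next, I would substitute $\eta = \eta_\alpha$ directly into the modular inequality of Theorem \ref{continuitamodulare} to obtain
\[ I^{\eta_\alpha}[\lambda\, S_w^{\varphi,\psi} f] \le \frac{M_0(\psi)\lVert\varphi\rVert_1}{M_0(\varphi)\tilde{M}_0(\psi)}\, I^{\eta_\alpha}[\lambda M_0(\varphi)\tilde{M}_0(\psi)\, f]. \]
Expanding the definition $I^{\eta_\alpha}[g] = \int_{\mathbb{R}}\bigl(\exp(|g(x)|^\alpha) - 1\bigr)\,dx$ on both sides reproduces exactly the stated inequality; no further computation is needed, because Theorem \ref{continuitamodulare} was already proved for an arbitrary convex $\varphi$-function.

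For the ``in particular'' clause, the argument mimics the final step used in the proof of Theorem \ref{continuitamodulare}: since $f \in L^{\eta_\alpha}(\mathbb{R})$, there exists $\bar\lambda > 0$ with $I^{\eta_\alpha}[\bar\lambda f] < +\infty$. Choosing $\lambda > 0$ so that $\lambda M_0(\varphi)\tilde{M}_0(\psi) \le \bar\lambda$, monotonicity of $\eta_\alpha$ yields $I^{\eta_\alpha}[\lambda M_0(\varphi)\tilde{M}_0(\psi)\, f] \le I^{\eta_\alpha}[\bar\lambda f] < +\infty$, whence the modular inequality forces $I^{\eta_\alpha}[\lambda\, S_w^{\varphi,\psi} f] < +\infty$, which is precisely the assertion that $S_w^{\varphi,\psi} f \in L^{\eta_\alpha}(\mathbb{R})$.

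There is essentially no obstacle: all the analytic work has been absorbed into Theorem \ref{continuitamodulare}, and only its abstract specialization is required here. The one structural difference from the two previous corollaries is that $\eta_\alpha$ fails the $\Delta_2$-condition, so modular convergence and Luxemburg-norm convergence cease to be equivalent; consequently a norm-convergence analogue cannot be deduced automatically, as the authors explicitly remark just before stating this corollary. Hence the plan confines itself to the modular inequality and the well-definedness conclusion that follows from it.
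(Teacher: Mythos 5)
Your proposal is correct and coincides with the paper's own treatment: the authors state this corollary without a separate proof, presenting it as the direct specialization of Theorem \ref{continuitamodulare} to $\eta=\eta_\alpha$, exactly as you do. Your additional remarks on the choice of $\lambda$ for the finiteness claim and on the failure of the $\Delta_2$-condition match the discussion surrounding the corollary in the paper.
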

Since in this case $\Delta_2$-property is not fulfilled, we can only state a result on modular convergence rather than on norm convergence. The next corollary follows immediately from Theorem \ref{modularconvergence}.
\begin{cor}\label{exp}Let $\psi$ be such that $M_0(\psi)<+\infty$. For every $f\in L^{\eta_\alpha}(\mathbb{R})$, there exists $\lambda>0$ such that
\[\lim_{w\rightarrow \infty}\int_{\mathbb{R}}\left (\exp{\left(\lambda\left| S_w^{\varphi,\psi}f(x)-f(x)\right|\right)^\alpha}-1\right)dx=0.\]
\end{cor}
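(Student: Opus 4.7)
The plan is to apply Theorem \ref{modularconvergence} directly, with the particular choice $\eta(u) := \eta_\alpha(u) = e^{u^\alpha} - 1$. First I would observe that $\eta_\alpha$ fulfils the requirements of a convex $\varphi$-function laid down at the beginning of Section~2: $\eta_\alpha(0)=0$ and $\eta_\alpha(u)>0$ for $u>0$, while convexity on $\mathbb{R}^+_0$ can be read off from $\eta_\alpha''(u) = \alpha u^{\alpha-2} e^{u^\alpha}\bigl[(\alpha-1) + \alpha u^\alpha\bigr]$ in the parameter regime producing a genuine Orlicz space. Consequently, the modular-convergence machinery of Section~4 is available for $\eta_\alpha$ without any further verification.

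Given $f\in L^{\eta_\alpha}(\mathbb{R})$ and under the assumption $M_0(\psi)<+\infty$, Theorem \ref{modularconvergence} guarantees the existence of some $\lambda>0$ for which
\[I^{\eta_\alpha}\bigl[\lambda(S_w^{\varphi,\psi}f-f)\bigr]\;\longrightarrow\;0,\qquad w\to+\infty.\]
Unfolding the definition of the modular $I^{\eta_\alpha}$ with $\eta_\alpha(u)=e^{u^\alpha}-1$ rewrites this convergence as
\[\lim_{w\to +\infty}\int_{\mathbb{R}}\left(\exp{\left(\lambda\left|S_w^{\varphi,\psi}f(x)-f(x)\right|\right)^\alpha}-1\right)dx=0,\]
which is precisely the conclusion of the corollary. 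So the statement follows by pure substitution into the already-established result.

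The only point worth flagging, and the closest thing to an obstacle, is that $\eta_\alpha$ fails the $\Delta_2$-condition; this prevents one from upgrading the modular statement to a Luxemburg-norm statement and explains why the corollary is phrased with \emph{some} $\lambda>0$ rather than \emph{every} $\lambda>0$. Since this same feature is already reflected in the conclusion of Theorem \ref{modularconvergence}, there is nothing additional to prove here.
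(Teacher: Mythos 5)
Your proposal is correct and follows exactly the paper's route: the paper states that Corollary \ref{exp} follows immediately from Theorem \ref{modularconvergence} by specializing the modular $I^{\eta}$ to $\eta_\alpha(u)=e^{u^\alpha}-1$, which is precisely your substitution argument. Your added remarks on convexity of $\eta_\alpha$ and on the failure of the $\Delta_2$-condition are consistent with the paper's own discussion preceding the corollary.
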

\section{Examples with graphical representations}
 In this last section we want to show specific examples of kernel functions $\varphi$ and $\psi$, for which the results proved in this paper hold, together with some graphical examples.
 \begin{enumerate}
 \item We put $\varphi(t)=\sigma_3(t)$, where $\sigma_3$ is the central B-spline of order 3, i.e,
\begin{equation}\label{s3}\sigma_3(t):=\frac{1}{2}\sum_{j=0}^{3}\binom{3}{j}\left(\frac{3}{2}+t-j\right)^2, \,\,\,t\in\mathbb{R},
\end{equation}
(see Figure \ref{graficosigma3}).
 \begin{figure}[h!]
    \begin{center}
    \includegraphics[width=6.6cm]{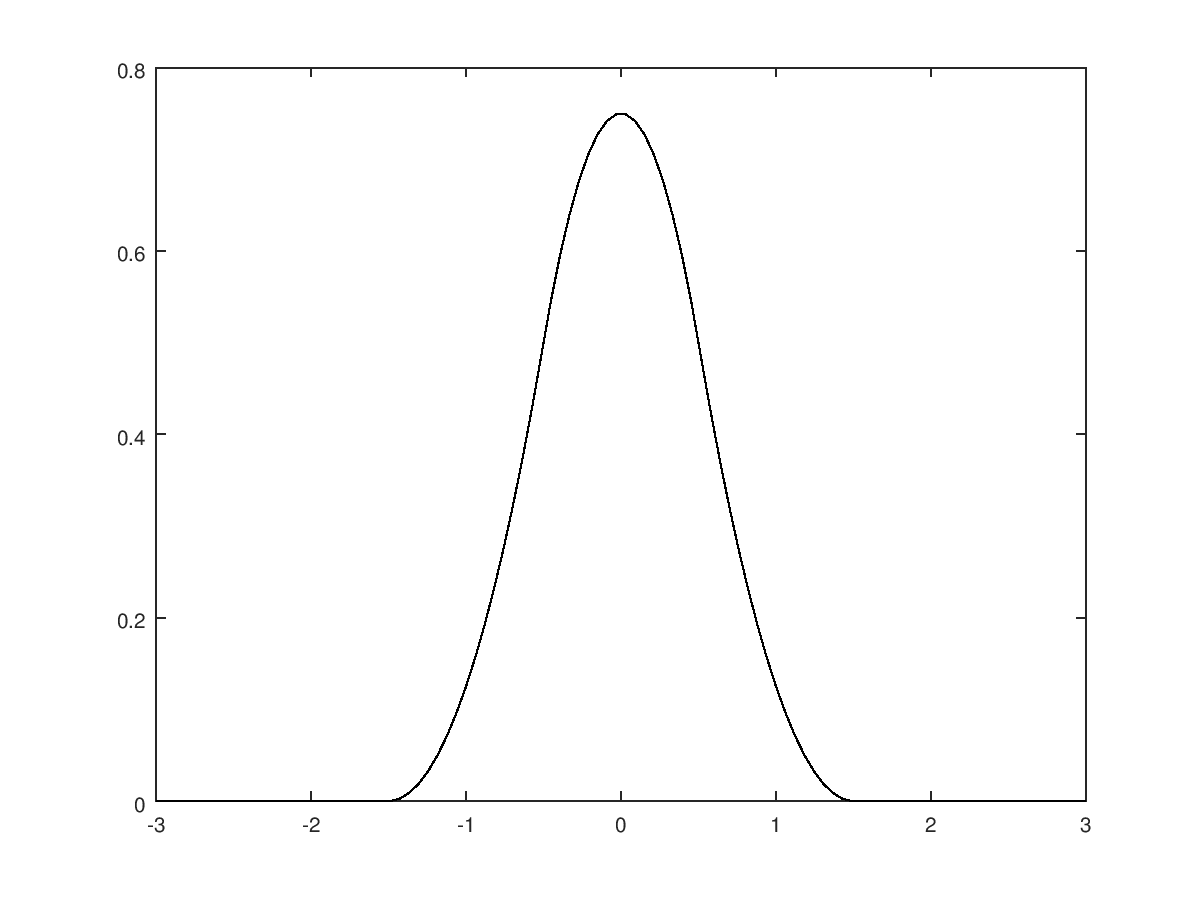}
    \end{center}
    \caption{The spline function $\sigma_3$.}
    \label{graficosigma3}
\end{figure}
\\In general, we define the central B-spline of order $n\in\mathbb{N}$ as 
\[\sigma_n(t):=\frac{1}{(n-1)!}\sum_{j=0}^{n}(-1)^j\binom{n}{j}\left(\frac{n}{2}+t-j \right)_+^{n-1},\,\, \,t\in\mathbb{R},\]
where $(\cdot)_+$ denotes the positive part, i.e., $(t)_+:=\max\{t,0\}, \,t\in\mathbb{R}$.

  The Fourier transform of $\sigma_n$ is given by
\[\widehat{\sigma_n}(v)=\text{sinc}^n\left(\frac{v}{2\pi}\right),\text{  }v\in\mathbb{R},\]
(see, e.g., \cite{1981jws,18,2020drna}), where the sinc-function is defined by
\[\text{sinc}(v):=
\begin{cases}
\frac{\sin\pi v}{\pi v}, & v\in\mathbb{R}\setminus 0, \\

1, & v=0.
\end{cases}
\]
 The functions $\sigma_n$ are bounded on $\mathbb{R}$ for all $n\in\mathbb{N}$ with compact support $[-n/2,n/2]$. This implies that $\sigma_n\in L^1(\mathbb{R})$ and the moment condition $M_r(\varphi)<+\infty$ is satisfied for all $r>0$. It is well-known that the singularity assumption (\ref{IpotesiD}) on $\varphi$ is equivalent to prove the following condition expressed in terms of $\widehat{\sigma_n}$:
\begin{equation}\label{poisson}\widehat{\sigma_n}(2k\pi)=\begin{cases}
1, & k=0, \\
0, & k\in\mathbb{Z}\setminus 0.
\end{cases}\end{equation}
The equivalence between the two conditions is a direct consequence of the Poisson summation formula (see, e.g., \cite{1971ap}).
\\
Rewriting explicitly the expression in (\ref{s3}), we have 
\[\sigma_3(t):=
\begin{cases}
\frac{3}{4}-t^2, & \left| t \right|\le \frac{1}{2}, \\
\frac{1}{2}\left(\frac{3}{2}-\left| t\right|\right)^2, & \frac{1}{2}<\left| t \right|\le \frac{3}{2}, \\
0, & \left| t \right|\ge\frac{3}{2},
\end{cases}
\, \,t\in\mathbb{R}.\]

  Now, we put $\psi(t)=\chi_{[0,1]}(t)$, $t\in\mathbb{R}$. In this case, Durrmeyer sampling type series becomes
\[\left (S_w^{\sigma_3,\chi_{[0,1]}}f\right)(x)=\sum_{k\in\mathbb{Z}}\sigma_3(wx-k) w \int_{\mathbb{R}}\chi_{[0,1]}(wu-k)f(u)du\]
\[=\sum_{k\in\mathbb{Z}}\sigma_3(wx-k)w\int_{\frac{k}{w}}^{\frac{k+1}{w}}f(u)du=\left(K_w^{\sigma_3}f\right)(x),\,\, \,x\in\mathbb{R}.\]
Next we apply the Durrmeyer sampling operator previously obtained, $S_w^{\sigma_3,\chi_{[0,1]}}$, to a particular regular function in $C(\mathbb{R})$, namely $f(x):=\frac{1}{x^2+1},\,x\in\mathbb{R}.$\\
The Durrmeyer sampling series $S_w^{\sigma_3,\chi_{[0,1]}}f$ of this function for $w=5$ and $w=10$ are given in Figure \ref{n5}. The red dotted line is the graph of the operator $S_w^{\sigma_3,\chi_{[0,1]}}f$, while the black line is the graph of $f$.\\
\begin{figure*}[tbph]
\centering
 \includegraphics[width=6.6cm]{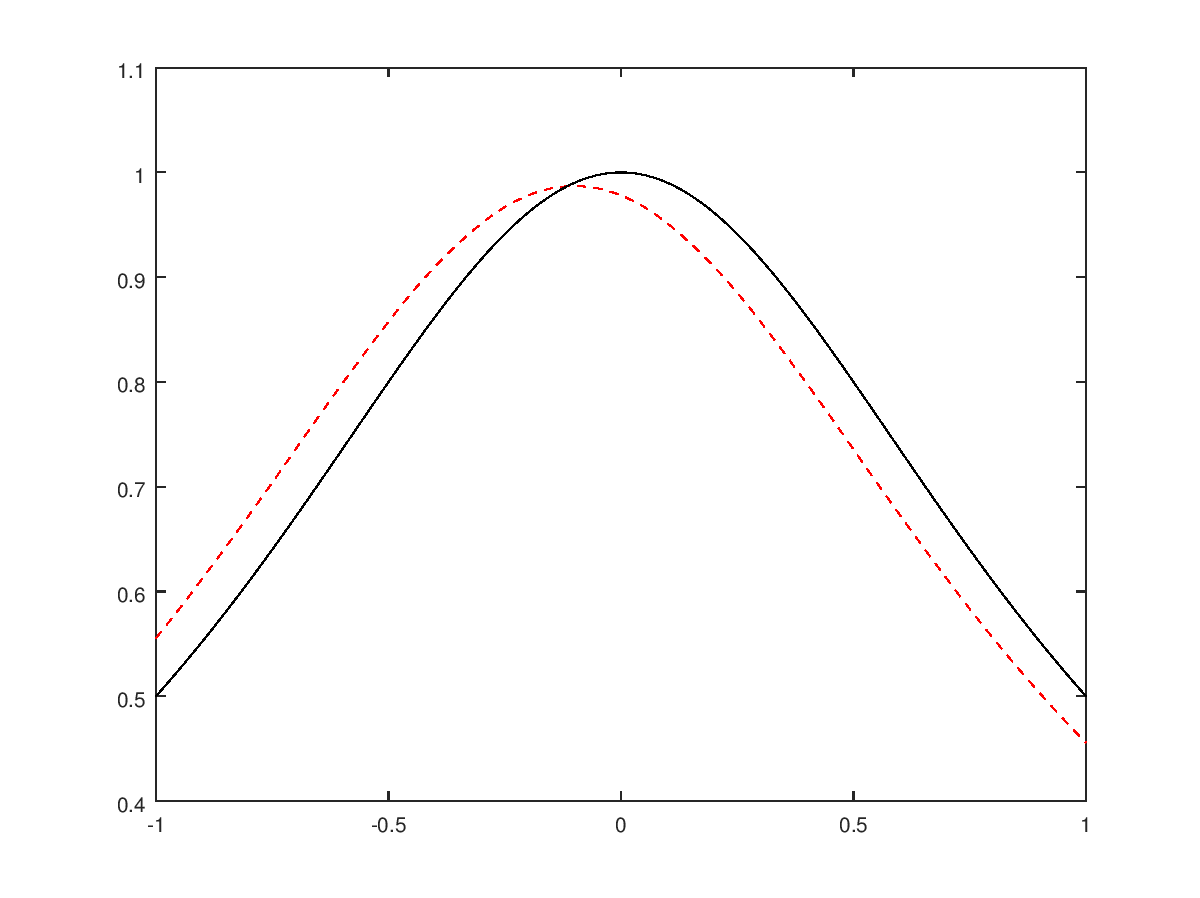}
 \,
\includegraphics[width=6.6cm]{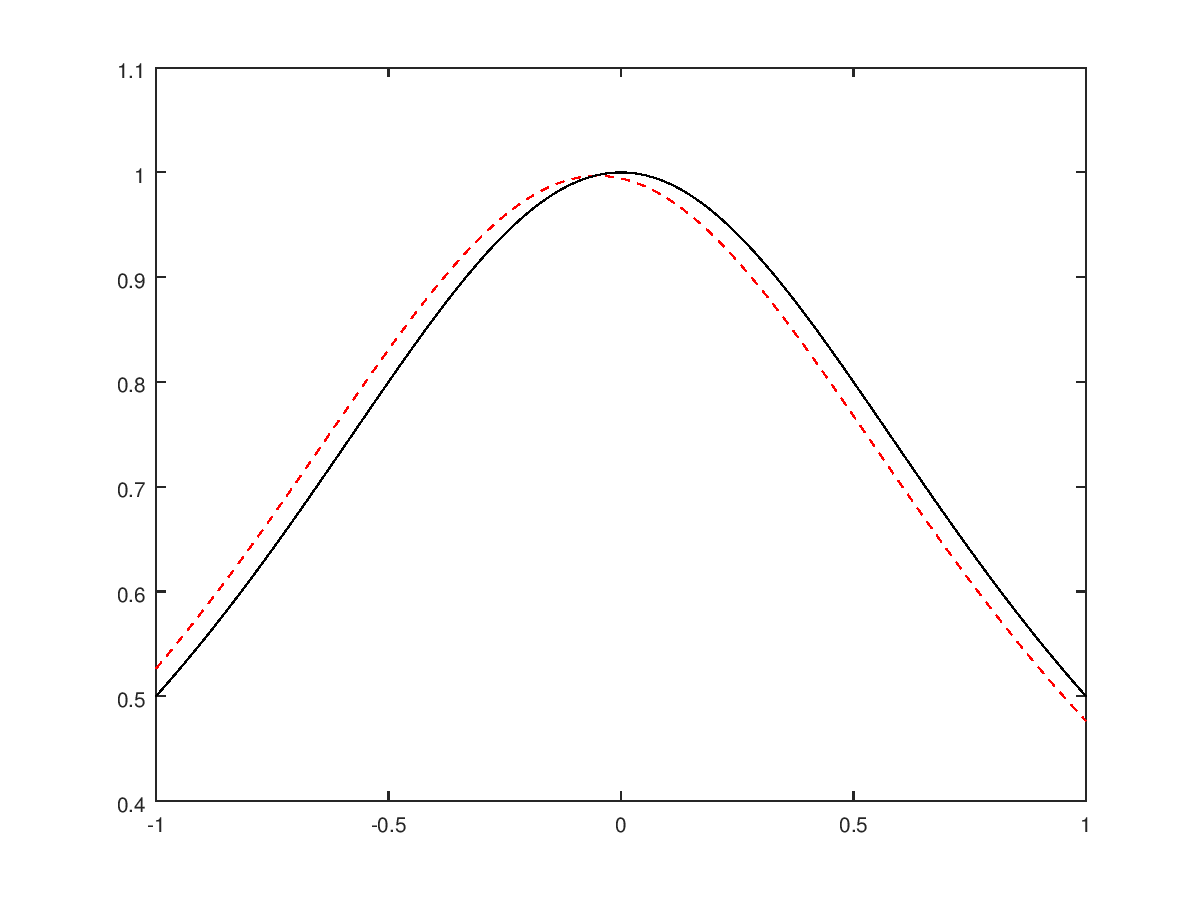}
\caption{The Durrmeyer sampling series $S_w^{\sigma_3,\chi_{[0,1]}}f$ with $w=5$ (on left) and $w=10$ (on right).}
\label{n5}
\end{figure*}

In this case, according to the quantitative estimate given in (Theorem \ref{Quantitativeestimate}), the order of uniform convergence of $S_w^{\sigma_3,\chi_{[0,1]}}f$ to $f$ is at least $\mathcal{O}(\frac{1}{w})$, as $w\rightarrow+\infty$.\\
Moreover, according to the Voronovskaja formula for the Durrmeyer sampling operator established in \cite{2}, it is possible to reach a better order of approximation, in case of sufficiently regular functions and under additional assumptions on the moments. This happens for example choosing $\psi(t)=\frac{1}{2}\chi_{[-1,1]}(t)$, $t\in\mathbb{R}$. In this case the Durrmeyer sampling type series becomes
\[\left (S_w^{\sigma_3,\frac{1}{2}\chi_{[-1,1]}}f \right)(x)=\sum_{k\in\mathbb{Z}}\sigma_3(wx-k) w \int_{\mathbb{R}}\frac{1}{2}\chi_{[-1,1]}(wu-k)f(u)du\]
\[=\sum_{k\in\mathbb{Z}}\sigma_3(wx-k)\frac{w}{2}\int_{\frac{k-1}{w}}^{\frac{k+1}{w}}f(u)du,\,\, \,x\in\mathbb{R}.\]
Thus, since $f\in C^2(\mathbb{R})$ and applying the Voronovskaja formula, we obtain that the order of approximation of $S_w^{\sigma_3,\frac{1}{2}\chi_{[-1,1]}}f$ to $f$, is at least $\mathcal{O}(\frac{1}{w^2})$, as $w\rightarrow+\infty$.
Below are the graphs of the Durrmeyer sampling series $S_w^{\sigma_3,\frac{1}{2}\chi_{[-1,1]}}$ of $f$ for $w=5$ and $w=10$ (see Figure \ref{n10}).
\begin{figure*}[tbph]
\centering
 \includegraphics[width=6.6cm]{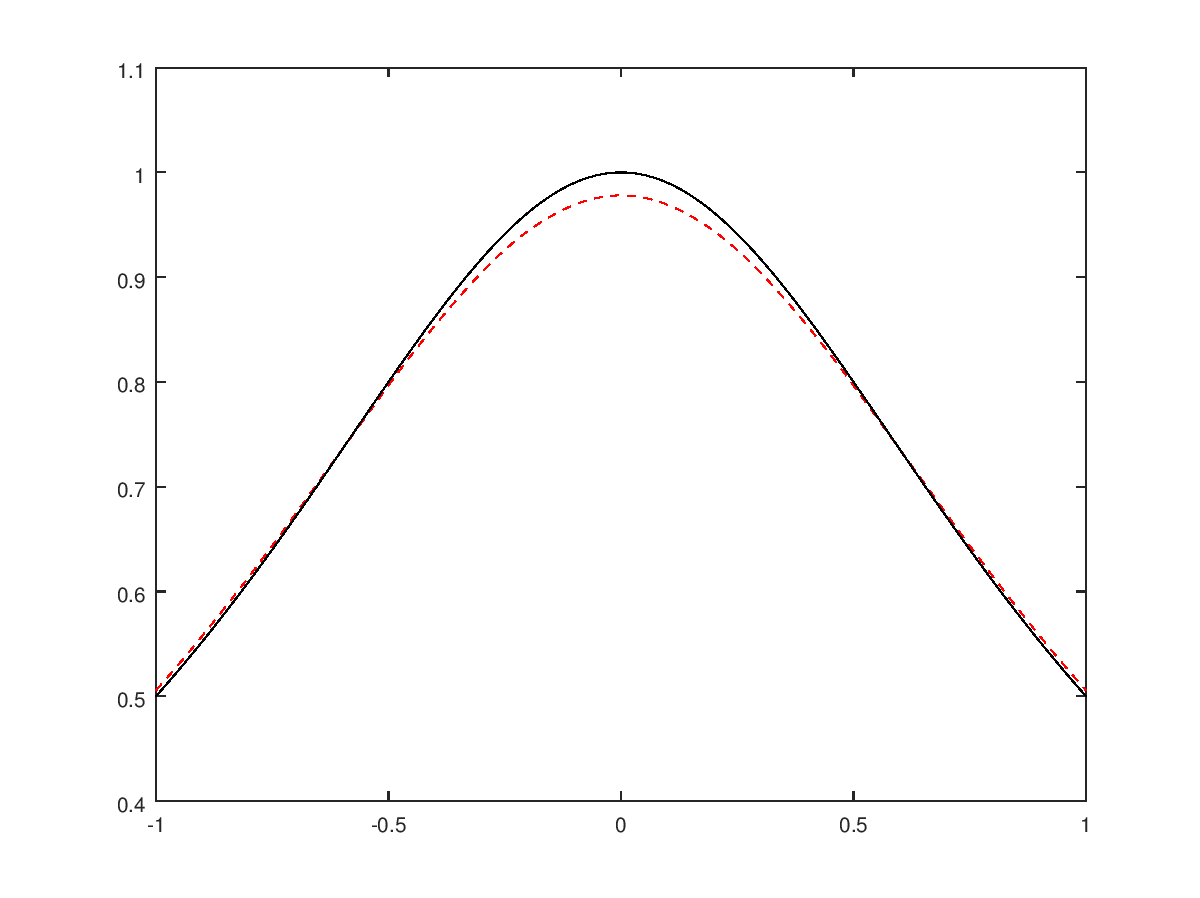}
 \,
\includegraphics[width=6.6cm]{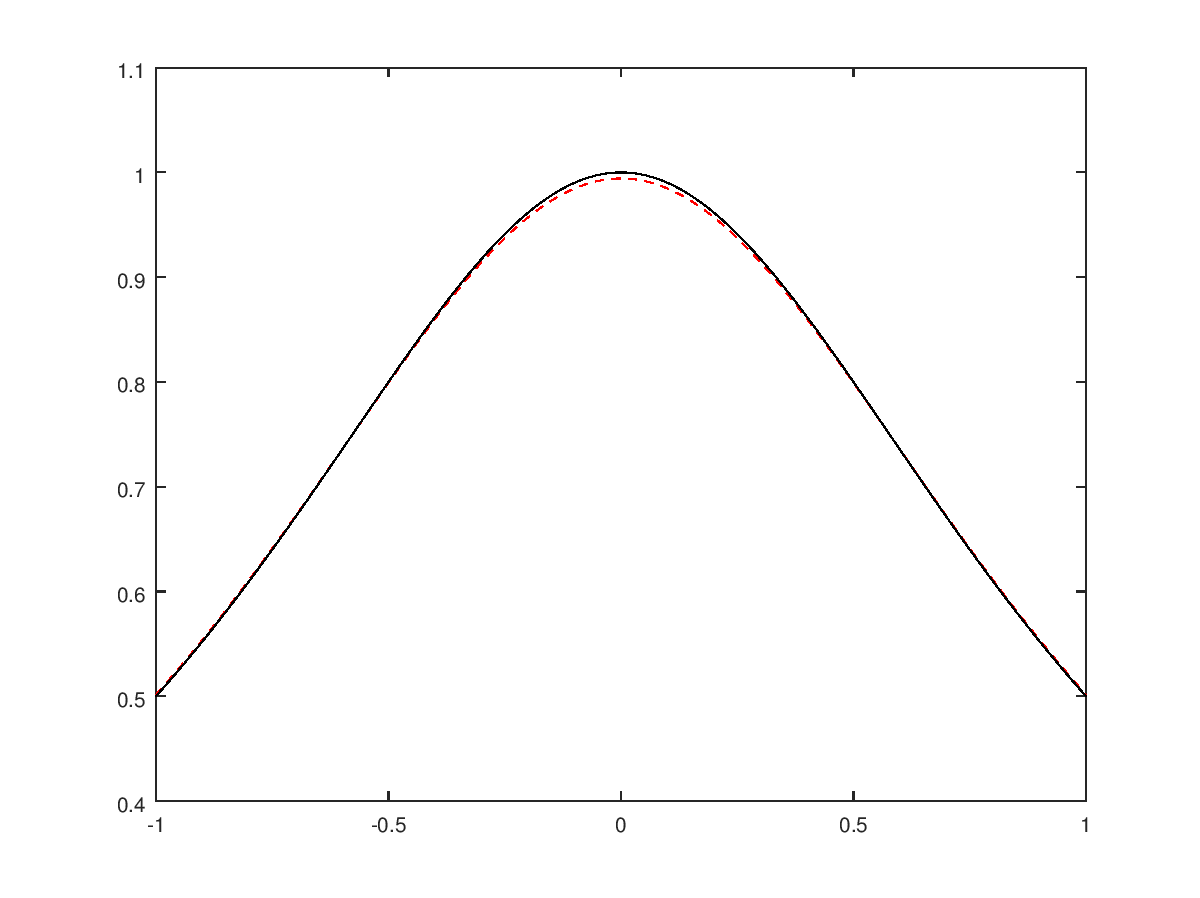}
\caption{The Durrmeyer sampling series $S_w^{\sigma_3,\frac{1}{2}\chi_{[-1,1]}}f$ with $w=5$ (on left) and $w=10$ (on right).}
\label{n10}
\end{figure*}

\item Now, in order to show an application of Theorem \ref{modularconvergence}, a useful example can be given considering functions that are not necessarily continuous and belonging to some $L^p$-space. Hence, choosing
\[\varphi(t)=\sigma_2(t)=(1-\left|t\right|)\chi_{[-1,1]}(t),\qquad t\in\mathbb{R},\] where $\sigma_2$ is the central B-spline of order 2 (see Figure \ref{graficosigma2}) and $\psi(t)=\chi_{[0,1]}(t)$, $t\in\mathbb{R}$, we want to apply the Durrmeyer sampling series $S_w^{\sigma_2,\chi_{[0,1]}}$ to two different discontinuous functions (see Figure \ref{fgradino}), namely,
\[f_1(x):=
\begin{cases}
1, & \left| x \right|\le 1, \\
0, & \left| x \right|> 1, \\
\end{cases}
\]
as well $f_2$, defined by
\[f_2(x):=
\begin{cases}
\frac{9}{x^2}, & x < -1, \\
2, & -1\le x <0, \\
1, & 0\le x <1,\\
\frac{-50}{x^4}, & x\ge1.
\end{cases}
\]
 \begin{figure}[h!]
    \begin{center}
    \includegraphics[width=6.6cm]{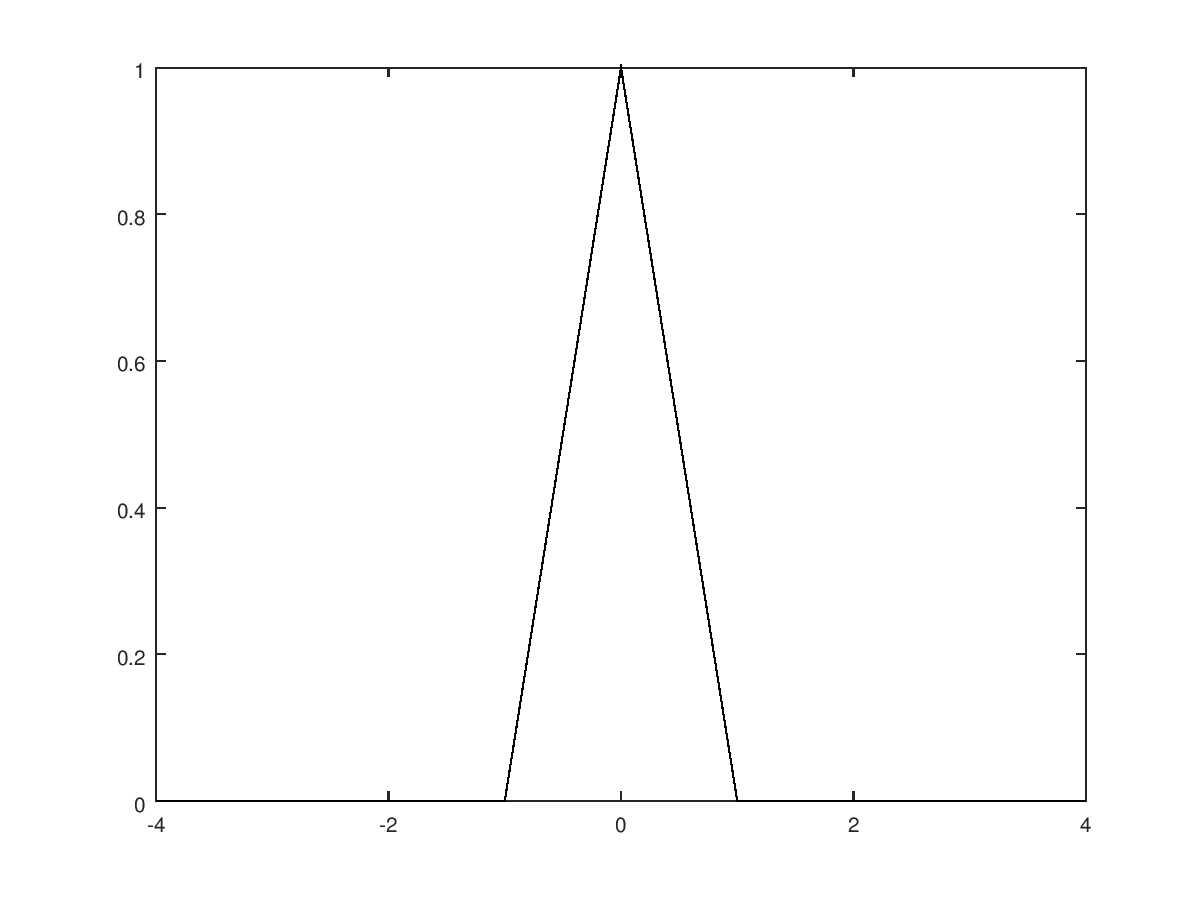}
    \end{center}
    \caption{The spline function $\sigma_2$.}
    \label{graficosigma2}
\end{figure}
\begin{figure*}[tbph]
\centering
 \includegraphics[width=6.6cm]{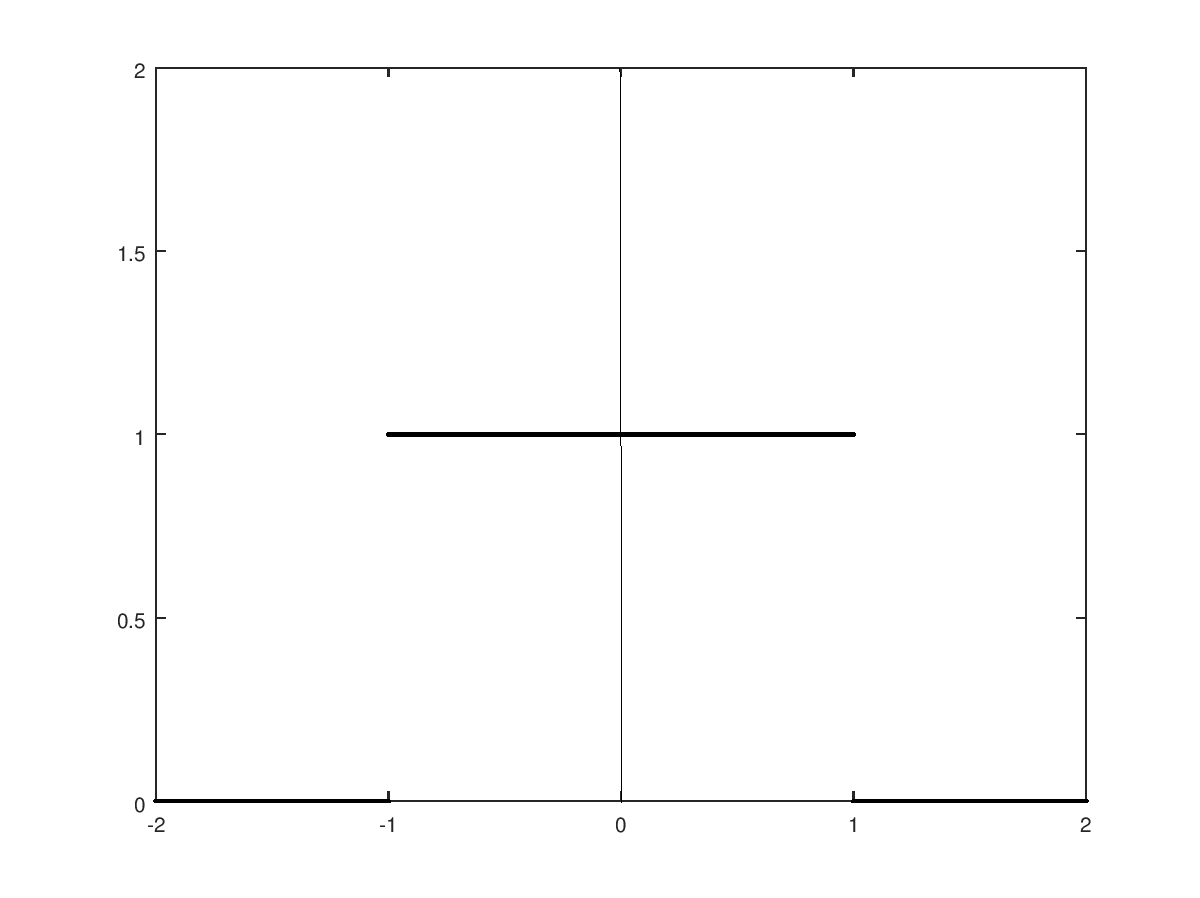}
 \,
\includegraphics[width=6.6cm]{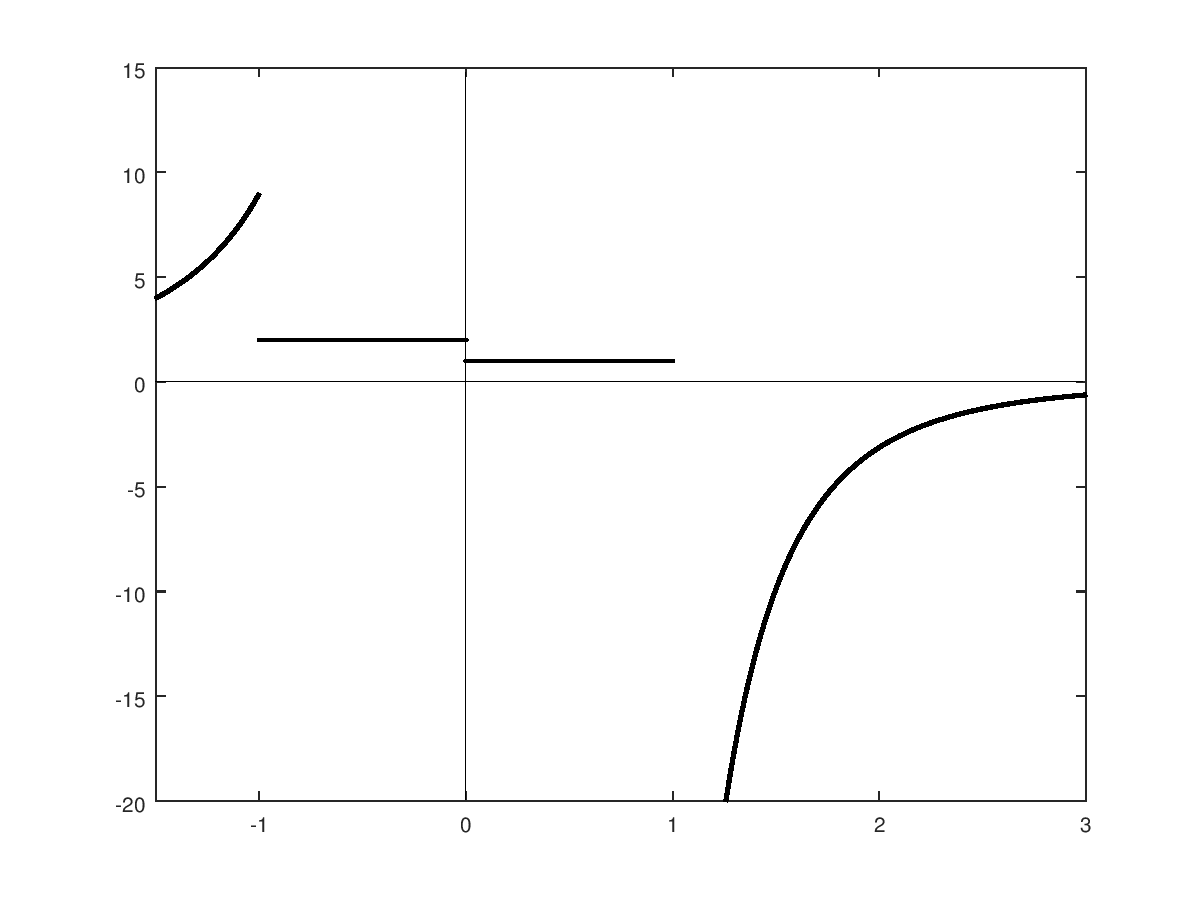}
    \caption{The graphs of the functions $f_1$ and $f_2$.}
    \label{fgradino}
\end{figure*}

  In general, we want to underline that from the properties of the kernel $\varphi=\sigma_n$ and since $\psi=\chi_{[0,1]}$ (which has compact support) satisfies trivially the condition (c) of Remark \ref{osservazione3.1}, Corollary \ref{lp}, Corollary \ref{log} and Corollary \ref{exp} hold.\\
The Durrmeyer type sampling series with $w=5$ and $w=10$ of the functions $f_1$ and $f_2$ are given in Figure \ref{f1} and Figure \ref{f2}, respectively. As before, the red dotted lines represent the graphs of the operators $S_w^{\sigma_2,\chi_{[0,1]}}f_1$ and $S_w^{\sigma_2,\chi_{[0,1]}}f_2$, while the black lines denote the graphs of the functions $f_1$ and $f_2$. Finally, it should also be noted that, since both the kernels $\varphi$ and $\psi$ have compact support, for the evaluation of $S_w^{\sigma_2,\chi_{[0,1]}}f(x)$ for a specific $x\in\mathbb{R}$, only a finite number of mean values are needed, both in the case of functions with compact support, like $f_1$, as in the case of functions with unbounded support, like $f_2$.
\begin{figure*}[tbph]
\centering
 \includegraphics[width=6.6cm]{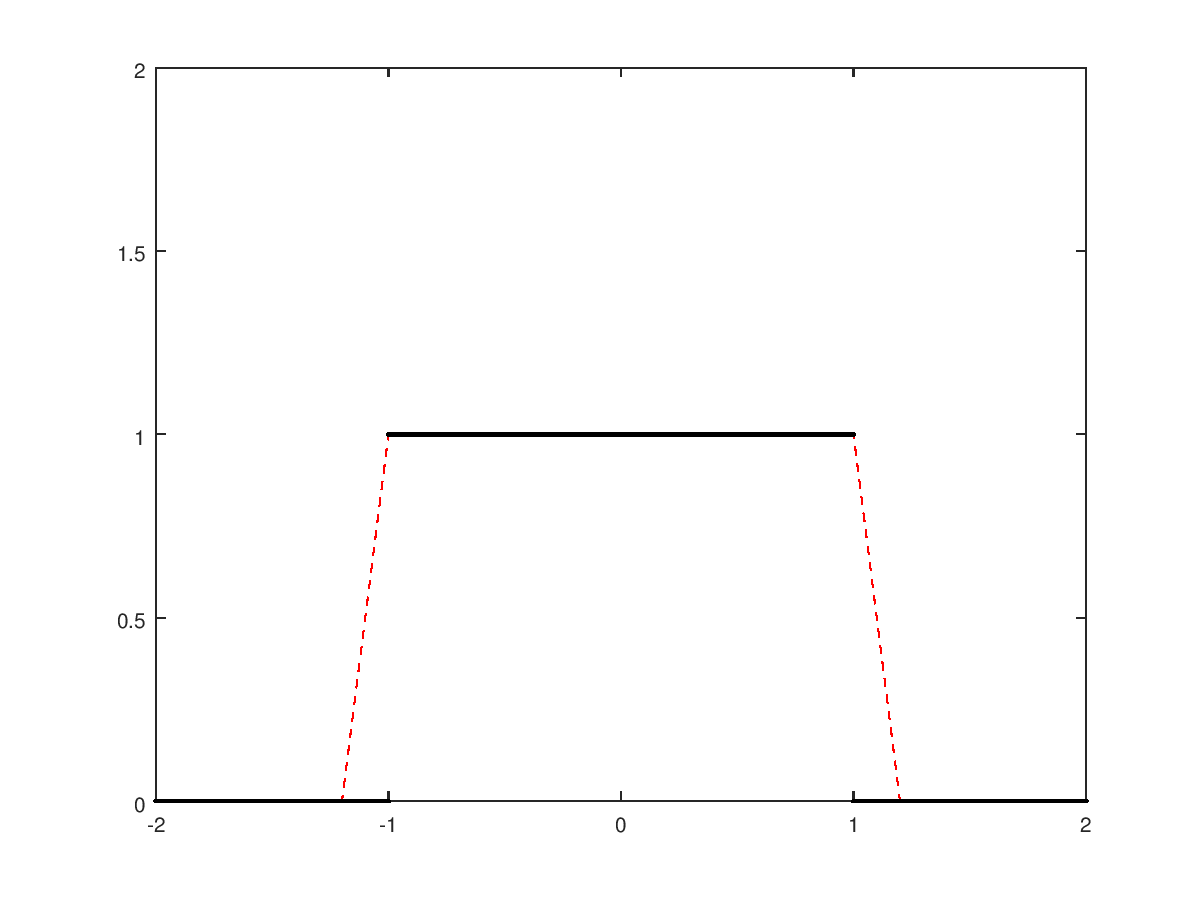}
\includegraphics[width=6.6cm]{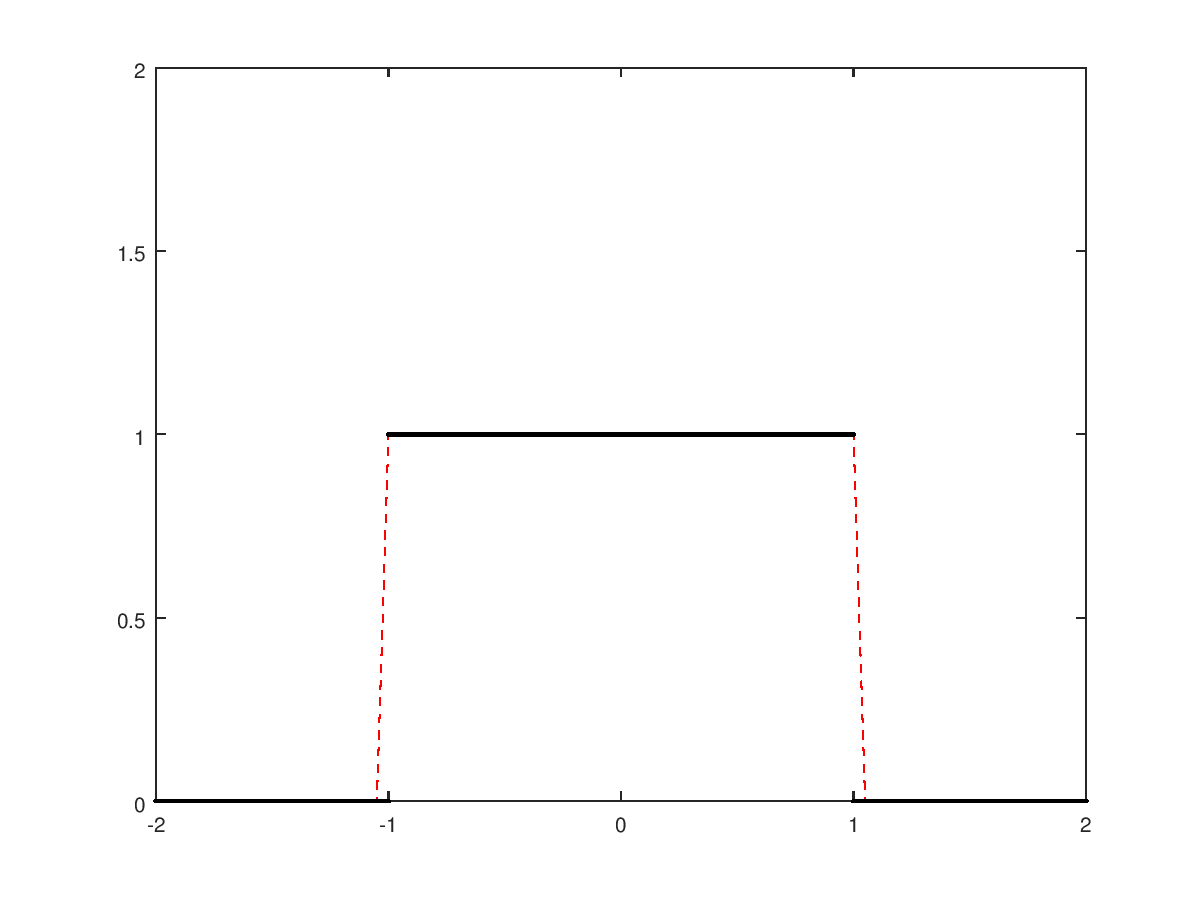}
\caption{The Durrmeyer sampling series $S_w^{\sigma_2,\chi_{[0,1]}}f_1$ with $w=5$ (on left) and $w=10$ (on right).}
\label{f1}
\end{figure*}
\begin{figure*}[tbph]
\centering
 \includegraphics[width=6.6cm]{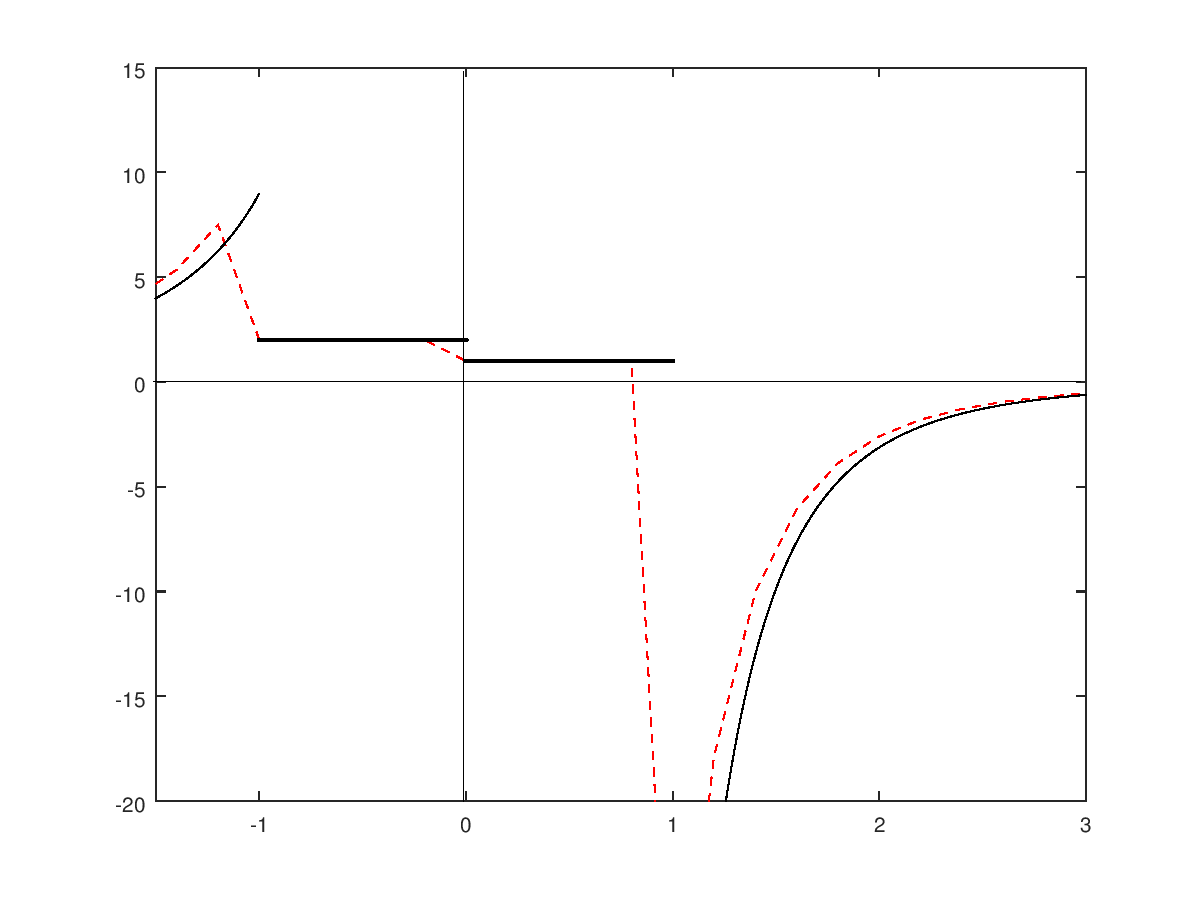}
\includegraphics[width=6.6cm]{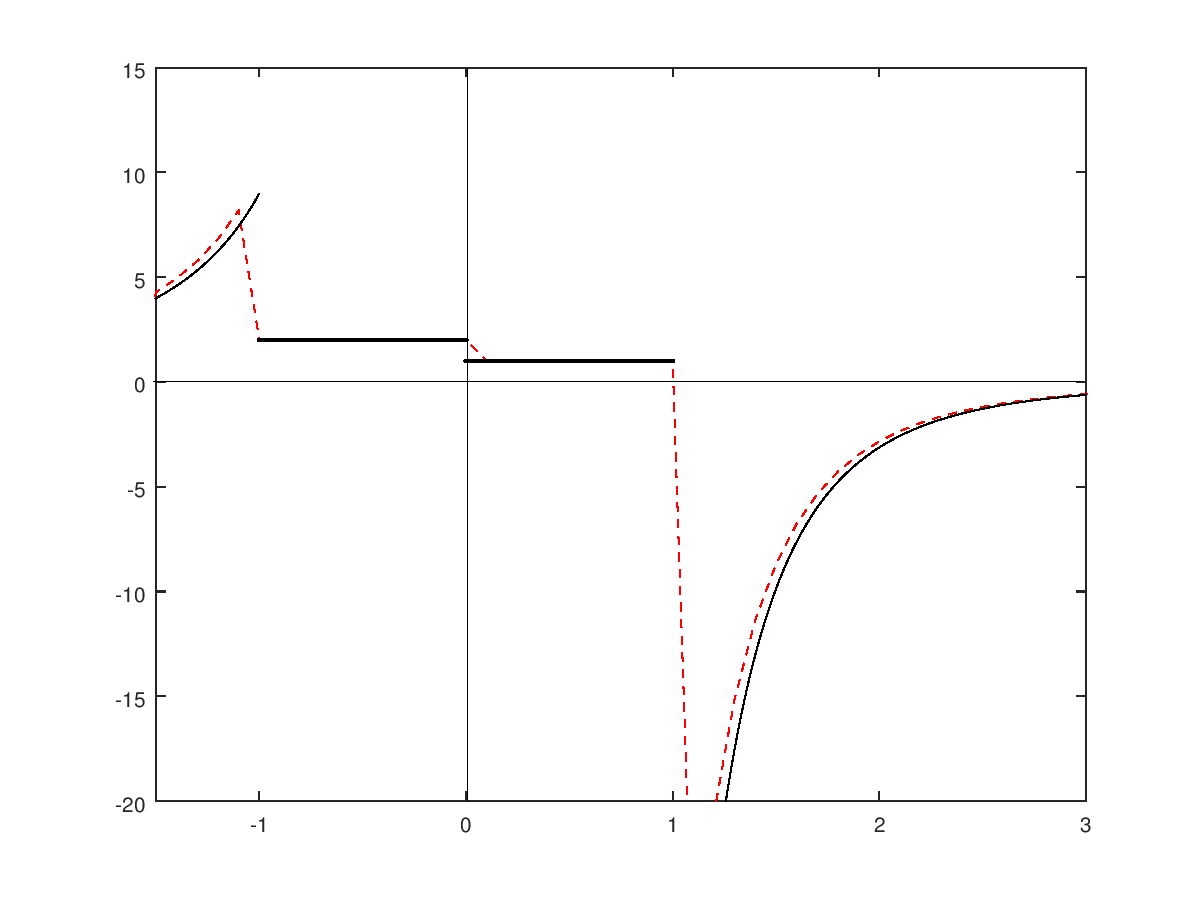}
\caption{The Durrmeyer sampling series $S_w^{\sigma_2,\chi_{[0,1]}}f_2$ with $w=5$ (on left) and $w=10$ (on right).}
\label{f2}
\end{figure*}
\end{enumerate}

  In conclusion, in order to underline that the convergence results proved in this paper hold for a large class of kernels $\varphi$ and $\psi$, we observe that it is possible to provide examples of Durrmeyer sampling operators based on a more general kernel $\psi$, also with unbounded support. For example, we can choose as $\psi$ the Fejér kernel (see Figure \ref{fejer}), defined by
\[F(t):=\frac{1}{2}\text{sinc}^2\left(\frac{t}{2}\right),\,\,t\in\mathbb{R}.\]
 
\begin{figure}[h!]
    \begin{center}
    \includegraphics[width=6.6cm]{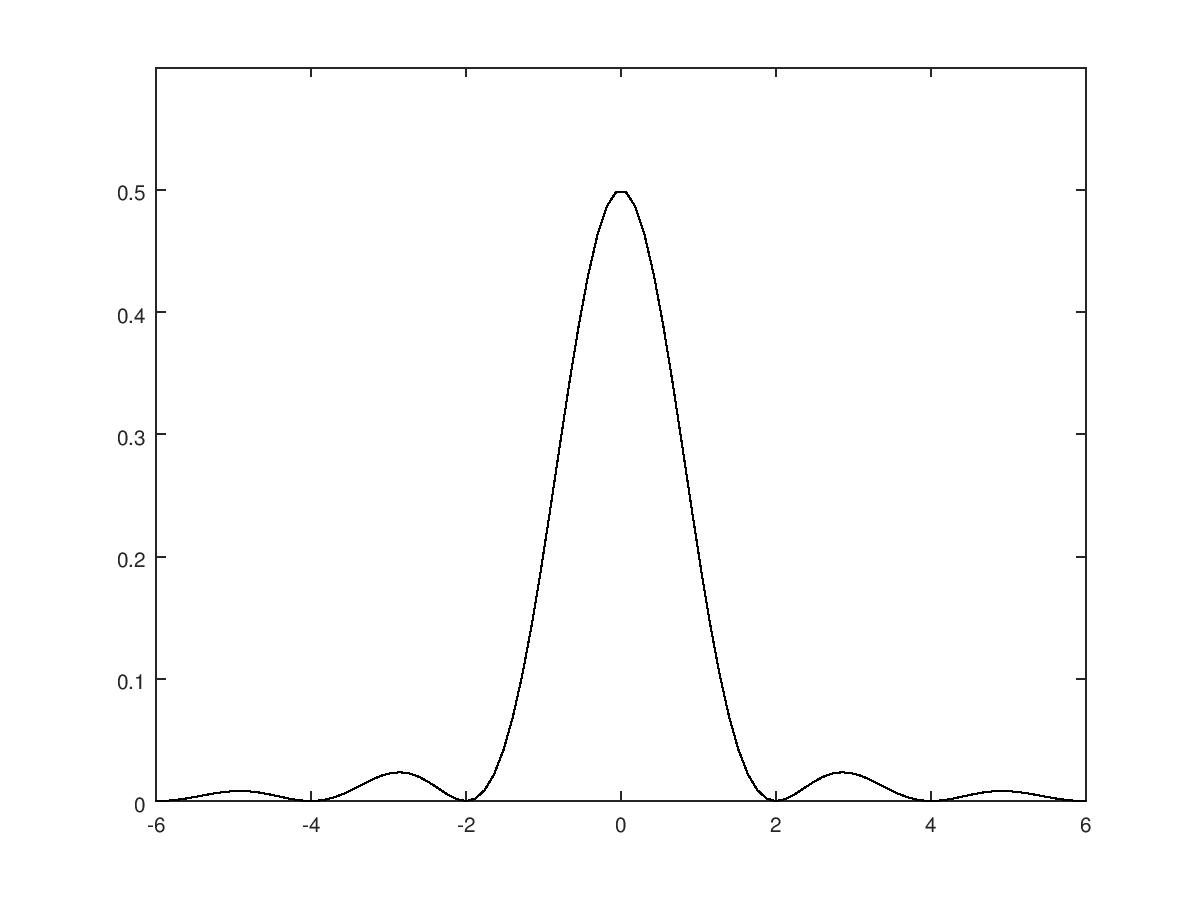}
    \end{center}
    \caption{The Fejér kernel $F$.}
    \label{fejer}
\end{figure}
Obviously, $F$ is bounded and non-negative on $\mathbb{R}$, belongs to $L^1(\mathbb{R})$ and satisfies $\int_{\mathbb{R}}F(t)dt=1$. Moreover, the moment condition $M_0(\psi)<+\infty$ is trivially fulfilled in view of Remark \ref{osservazione3.1} (c) with $0<\nu<1$.\\
Finally, it is interesting to observe that the Fejér kernel can be chosen also as the kernel $\varphi$. Indeed, since its Fourier transform is given by
\[\widehat{F}(v):=
\begin{cases}
1-\left|\frac{v}{\pi}\right|, & v\le\pi, \\

0, & v>\pi,
\end{cases}
\]
(see, e.g., \cite{1971ap}), it follows, by the equivalent condition (\ref{poisson}) (applied to $F$ in place of $\sigma_n$), that $F$ satisfies the discrete singularity assumption (\ref{IpotesiD}) on $\varphi$.

\section*{Acknowledgments}
{\small The authors are members of the Gruppo Nazionale per l'Analisi Matematica, la Probabilit\`a e le loro Applicazioni (GNAMPA) of the Istituto Nazionale di Alta Matematica (INdAM), and of the network RITA (Research ITalian network on Approximation). The first author has been partially supported within the 2019 GNAMPA-INdAM Project ``Metodi di analisi reale per l'approssimazione attraverso operatori discreti e applicazioni'', while the third author within the projects: (1) Ricerca di Base 2018 dell'Universit\`a degli Studi di Perugia - "Metodi di Teoria dell'Approssimazione, Analisi Reale, Analisi Nonlineare e loro Applicazioni", (2) Ricerca di Base 2019 dell'Universit\`a degli Studi di Perugia - "Integrazione, Approssimazione, Analisi Nonlineare e loro Applicazioni", (3) "Metodi e processi innovativi per lo sviluppo di una banca di immagini mediche per fini diagnostici" funded by the Fondazione Cassa di Risparmio di Perugia (FCRP), 2018, (4) "Metodiche di Imaging non invasivo mediante angiografia OCT sequenziale per lo studio delle Retinopatie degenerative dell'Anziano (M.I.R.A.)", funded by FCRP, 2019.}

\end{document}